\newcommand{\norm}[1]{\left\lVert#1\right\rVert}
\newcommand{\innerproduct}[2]{\left\langle #1, #2 \right\rangle}
\DeclareMathOperator*{\argmin}{arg\,min}
\begin{document}
\mainmatter              
\title{Local linear smoothing in additive models as data projection}
%
%
\author{Munir Hiabu\inst{1} \and Enno Mammen\inst{2} \and
Joseph T. Meyer\inst{2} }
%
%
%
\institute{University of Copenhagen, Department of Mathematical Sciences, Universitetsparken 5, 2100 Copenhagen O,
Denmark\\
\and
Heidelberg University,
Institute for Applied Mathematics,  INF 205,\\
69120 Heidelberg, Germany\\
}

\maketitle              

\begin{abstract}
We discuss local linear smooth backfitting for additive nonparametric models. This procedure is well known for achieving optimal convergence rates under appropriate smoothness conditions. In particular, it allows for the estimation of each component of an additive model with the same asymptotic accuracy as if the other components were known. The asymptotic discussion of local linear smooth backfitting is rather complex because typically an overwhelming notation is required for a detailed discussion. In this paper we interpret the  local linear smooth backfitting estimator as a projection of the data onto a linear space with a suitably chosen semi-norm. This approach simplifies both the mathematical discussion as well as the intuitive understanding of properties of this version of smooth backfitting. 	

\keywords{Additive models, local linear estimation, backfitting, data projection, kernel smoothing}
\end{abstract}

	\section{Introduction}
	In this paper we consider local linear smoothing in an additive model 
	\begin{align}\label{model:add}
	E[Y_i | X_i ] =  m_0+m_1(X_{i1})+\cdots +m_d(X_{id}),
	\end{align}
	where $(Y_i,X_i)$ $(i=1,\dots,n)$ are $iid$ observations with values in $\mathbb R \times \mathcal X$ for a bounded connected open subset $\mathcal X\subseteq\mathbb R^d$.
	Here, $m_j$ $(j=1,\dots,d)$ are some smooth functions which we aim to estimate and $m_0\in\mathbb{R}$. Below, we will add norming conditions on $m_0,\dots,m_d$ such that they are uniquely defined given the sum. 
	In \cite{mammen1999existence} a local linear  smooth backfitting estimator based on smoothing kernels was proposed for the additive functions $m_j$. There, it was shown that their version of a  
	local linear estimator $\hat m_j$ of the function $m_j$ has the same pointwise asymptotic variance and bias as a classical local linear estimator in the oracle model, where one observes i.i.d. observations $(Y_i^\ast,X_{ij})$ with
	\[
	E[Y^\ast_i | X_{ij} ] =  m_j(X_{ij}), \quad Y^\ast_i = Y_i-\sum_{k \neq j} m_k({X_{ik}}).
	\]
	In this respect the local linear estimator differs from other smoothing methods where the asymptotic bias of the estimator of the function $m_j$ depends on the shape of the functions $m_k$ for $k\not = j$. An example for an estimator with this disadvantageous bias property is the local constant smooth backfitting estimator which is based on a backfitting implementation of one-dimensional Nadaraya-Watson estimators. It is also the case for other smoothing estimators as regression splines, smoothing splines and orthogonal series estimators, where in addition also no closed form expression for the asymptotic bias is available.
	Asymptotic properties of local linear smoothing simplify the choice of bandwidths as well as the statistical interpretation of the estimators $\hat m_j$. These aspects have made local linear smooth backfitting a preferred choice for estimation in additive models.
Deriving asymptotic theory for local linear smooth backfitting is typically complicated by an overloaded notation that is required for detailed proofs. In this note we will use that the local linear smooth backfitting estimator has a nice geometric interpretation. This simplifies mathematical arguments and allows for a more intuitive derivation of asymptotic properties. In particular, we will see that the estimator can be characterized as a solution of an empirical integral equation of the second kind as is the case for local constant smooth backfitting, see \cite{mammen2009nonparametric}.
	
	Our main point is that the local linear estimator can be seen as an orthogonal projection of the response vector $Y= (Y)_{i=1,\dots,n}$ onto a subspace of a suitably chosen linear space. A similar point of view is taken in \cite{mammen2001general} for a related construction where it was also shown that regression splines, smoothing splines and orthogonal series estimators can be interpreted as projection of the data in an appropriately chosen Hilbert space. Whereas this interpretation is rather straight forward for these classes of estimators it is not immediately clear that it also applies for kernel smoothing and local polynomial smoothing, see \cite{mammen2001general}. In this paper we will introduce a new and simple view of local linear smoothing as data projection.  In the next section we will define the required spaces together with a corresponding semi-norm. We will also introduce a new algorithm motivated by our interpretation of local linear smooth backfitting. The algorithm will be discussed in Section \ref{sec:exist}. In Section \ref{sec:asymp} we will see that our geometric point of view allows for simplified arguments for the asymptotic study of properties of the
	local linear smooth backfitting estimator.
	
%

	The additive model \eqref{model:add} was first introduced in \cite{friedman1981projection} and enjoys great popularity for two main reasons.
	The first is estimation performance.
	While not being as restrictive as a linear model, in contrast to a fully flexible model, it is not subject to the curse of dimensionality. Assuming that  $E[Y_i | X_i =x]$ is twice continuously differentiable, the optimal rate of convergence of an estimator of $E[Y_i | X_i =x]$ is $n^{-2/(d+4)}$ if no further structural assumptions are made, see  \cite{stone1982optimal}. This means the rate deteriorates exponentially in the dimension of the covariates $d$. Under the additive model assumption \eqref{model:add} and assuming that each function $m_j$, $j=1,\dots, d$ is twice continuously differentiable, the optimal rate of convergence is $n^{-2/5}$.
	The second reason is interpretability. In many applications it is desirable to understand the relationship between predictors and the response. 
	Even if the goal is prediction only, understanding this relationship may help detect systematic biases in the estimator, so that out of sample performance can be improved or adjusted for.
	While it is almost impossible to grasp the global structure of a multivariate function $m$ in general, the additive structure  \eqref{model:add} allows for visualisation of each of the univariate functions, providing a  comprehensible connection between predictors and the response.

	Though the setting considered in this paper is fairly simple, it can be seen as a baseline for more complicated settings.One main drawback is the additive structure which cannot account for interactions between covariates.
 It is assuring however that even if the true model is not additive, the smooth backfitting estimator is still defined as the closest additive approximation.  This will be shown in the next section.
If the true regression function is far away from an additive structure,  then a more complex structure may be preferable. This could be done by adding higher-dimensional covariates, products of univariate functions or considering a generalized additive model. For testing procedures that compare such specifications, see also \cite{Hardle2001,MamSper21}. Besides such structural assumptions, other directions the ideas in this paper can be extended to are the consideration of time-series data or high dimensional settings. Settings using more complicated responses like survival times, densities or other functional data may also be approached. Some of these cases have been considered, e.g., in \cite{mammen2003generalised},  \cite{yu2008smooth}, \cite{mammen2009nonparametric}, \cite{mammen2014additive}, \cite{han2018smooth}, \cite{MamSper21}, \cite{han2019additive}, \cite{jeon2020additive}, \cite{hiabu2020smooth} and \cite{Gregory2020optimal}.
	We hope that a better understanding of local linear estimation in this simple setting will help advance theory and methodology for more complicated settings in the future. 
	
	\section{Local linear  smoothing in additive models} \label{sec:local linear}
	
	The local linear smooth backfitting estimator $\hat{m}=(\hat{m}_0,\hat{m}_1,\dots,\hat{m}_d,\hat{m}_1^{(1)},\dots,\hat{m}_d^{(1)})$ is defined as the minimizer of the criterion
	\begin{eqnarray*} 
		&&S(f_0,\dots,f_d, f_1^{(1)},\dots,f_d^{(1)}) \\
		&& \qquad  =  n^{-1}\sum_{i=1}^n \int_{\mathcal X} \left \{ Y_i - f_0 -\sum_{j=1}^d f_j(x_j) -\sum_{j=1}^d f_j^{(1)}(x_j)  (X_{ij}-x_j)\right \} ^2 \\
		&& \qquad \times K_h^{X_i}(X_i-x) \mathrm dx
	\end{eqnarray*}
	under the constraint
	\begin{eqnarray} \label{eq:constr}
	\sum_{i=1}^n \int_{\mathcal X}  f_j(x_j)  K_h^{X_i}(X_i-x) \mathrm dx= 0
	\end{eqnarray}
	for $j=1,\dots,d$. The minimization runs over all values $f_0 \in \mathbb R$ and all functions $f_j, f_j^{(1)}: \mathcal X_j \to \mathbb R$ with $\mathcal X_j = \{ u \in \mathbb R:$ there exists an $ x \in \mathcal X$ with $ x_j = u\}$. Under the constraint (\ref{eq:constr}) and some conditions introduced in Section \ref{sec:exist}, the minimizer is unique. For $j=1,\dots, d$ the local linear estimator of $m_j$ is defined by $\hat{m}_j$.	
	
	\noindent In the definition of $S$ the function $K_h^{u}(\cdot)$ is a boundary corrected product kernel, i.e.,
	\[
	K_h^{u}(u-x)=  \frac{\prod_{j=1}^d  \kappa \left(\frac{u_j-x_j}{h_j}\right)}{\int_{\mathcal X}  \prod_{j=1}^d  \kappa \left(\frac{u_j-v_j}{h_j}\right) \mathrm dv_j   }.
	\]
Here, $h=(h_1,\dots,h_d)$ is a bandwidth vector with $h_1,\dots,h_d > 0$ and $\kappa :\mathbb{R}\rightarrow\mathbb{R}$ is some given univariate density function, i.e., $\kappa (t)\geq 0$ and $\int \kappa (t) \mathrm dt=1$.
	We use the variable $u$ twice in the notation because away from the boundary of $\mathcal X$, the kernel  $K_h^{u}(u-x)$ only depends on $u-x$.\\
	It is worth emphasizing  that the empirical minimization criterion $S$ depends on a choice of a kernel $\kappa$ and a smoothing bandwidth $h$. While the choice of $\kappa$ is not of great importance, see similar to e.g. \cite[Section 3.3.2]{silverman2018density},
	the quality of estimation heavily depends on an appropriate choice of the smoothing parameter $h$.
	We will not discuss the choice of a (data-driven) bandwidth in this paper,  but we note that
	the asymptotic properties of the  local linear smoothing estimator do simplify the choice of bandwidths compared to other estimators. The reason is that the asymptotic bias of one additive  component does not depend on the shape of the other components and on the bandwidths used for the other components.

	\noindent We now argue that the local linear smooth backfitting estimator can be interpreted as an empirical projection of the data onto a space of additive functions. We introduce the linear space
	\[
	\mathcal H = \left \{ (f^{i,j})_ {i=1,\dots,n;\ j=0,\dots,d} | \  f^{i,j}:  \mathcal X \mapsto \mathbb R, \  {\norm f }_n< \infty \right \}
	\]
	with inner product
	\begin{eqnarray*} 
		{\innerproduct fg}_n &=& n^{-1}\sum_{i=1}^n \int_{\mathcal X} \left \{f^{i,0}(x) +\sum_{j=1}^df^{i,j}(x) (X_{ij}-x_j)\right \} \\
		&& \qquad \times \left \{g^{i,0}(x) +\sum_{k=1}^d g^{i,k}(x) (X_{ij}-x_j)\right\} K_h^{X_i}(X_i-x) \mathrm dx
	\end{eqnarray*} 
	and norm $\| f\|_n = \sqrt {{\innerproduct f f}_n}$.\newline
	We identify the response $Y= (Y_i)_{i=1,\dots,n}$ as an element of $\mathcal H$ via $Y^{i,0}\equiv Y_i$ and $Y^{i,j}\equiv 0$ for $j\geq1$.
	We will later assume that the functions $m_j$ are differentiable. We identify the regression function 
	\[
	m:\mathcal{X}\rightarrow\mathbb{R},\ m(x)=m_0+m_1(x_1)+\dots+m_d(x_d)
	\]
	as an element of 
	$\mathcal H$ via $m^{i,0}(x)=m_0+\sum_j m_j(x_j)$ and $m^{i,j}={\partial m_j(x_j)}/{\partial x_j}$ for $j\geq 1$.  Note that the components of $m\in\mathcal{H}$ do not depend on $i$.
	We define the following subspaces of $\mathcal H$:
	\begin{align*}
	\mathcal H_{full} &= \left \{ f \in \mathcal H | \text{ the components of }\ f \  \text{do not depend on} \ i \right \}, \\
	\mathcal H_{add} &=  \left \{ f \in \mathcal H_{full} | \  f^{i,0}(x)=f_0 + f_1(x_1)+\cdots + f_d(x_d),   f^{i,j}(x) =f^{(1)}_j(x_j)  \text{ for}\right.  \\ & \qquad   \text{some } f_0 \in \mathbb R \text { and some univariate functions}\  f_j, f_j^{(1)}: \mathcal X_j \to \mathbb R, j=1,\dots,\\
	&\qquad \left.  d\text{ with }\sum_{i=1}^n \int_{\mathcal X} f_{j}(x_j)  K^{X_i}_h(X_i-x) \mathrm dx =0  \right \}.
	\end{align*}
	For a function $f \in \mathcal H_{add}$ we write $f_0 \in \mathbb R$ and $f_j, f_j^{(1)}: \mathcal X_j \to \mathbb R$ for $j=1,\dots,d$ for the constant and functions that define $f$.  In the next section we will state conditions under which the constant $f_0$ and functions $f_j, f_j^{(1)}$ are unique given any $f \in \mathcal H_{add}$. 	By a slight abuse of notation we also write $f_j$ for the element of $\mathcal H$ given by $f^{i,0}(x)= f_j(x_j)$ and  $f^{i,k}(x) \equiv 0$ for $k=1,\dots,d$. We also write  $f_j^{(1)}$ for the element of $\mathcal H$ with $f^{i,k}(x) \equiv 0$ for $k \not = j$ and $f^{i,j}(x) = f_j^{(1)}(x_j)$. Furthermore, we define $f_{j+d}:=f_j^{(1)}$ for $j=1,\dots,d$ for both interpretations. Thus, for $f \in \mathcal H_{add}$ we have 
	\begin{align}\label{eq:add}
	f= f_0+\dots+f_{2d}.
	\end{align}
	Recall that the linear smooth backfitting estimator $$\widehat m=(\hat m_0, \hat m_1, \dots,\hat m_d, \hat m_1^{(1)},\dots,\hat m_d^{(1)})$$ is defined as the minimizer of the criterion $S$ under the constraint 
	\eqref{eq:constr}.
	By setting $\hat{m}^{i,0}(x)=\hat{m}_0+\sum_{j=1} ^d \hat{m}_j(x_j)$ and $m^{i,j}(x)=\hat{m}_j^{(1)}(x_j)$ for $j\geq 1$ it can easily be seen that
	\begin{align}\label{minimiser}
	\widehat m = \argmin_{f\in \mathcal H_{add}} {\norm{Y-f}}_n.
	\end{align}
	In the next section we will state conditions under which the minimization has a unique solution.
	Equation \eqref{minimiser} provides a geometric interpretation of local linear smooth backfitting. The local linear smooth backfitting estimator is an orthogonal projection of the response vector $Y$ onto the linear subspace $\mathcal H_{add}\subseteq\mathcal H$. We will make repeated use of this fact in this paper.\\
	We now introduce the following subspaces of $ \mathcal H$:
	\begin{align*}
	\mathcal H_0 &= \left\{ f \in \mathcal H | f^{i,0}(x)\equiv c \text{ for some } c \in \mathbb R, f^{i,j}(x)\equiv 0  \ \text{ for }\  j\neq0  \right \},\\
	\mathcal H_k &= \left\{ f \in \mathcal H  | \ f^{i,j}(x)\equiv 0 \ \text{ for }\ j\neq0, \text { and } f^{i,0}(x)=f_k(x_k)  \text{ for some  univariate}\right .\\
	& \left . \qquad  \text{function } f_k: \mathcal X_k \to \mathbb R \text{ with }
	\sum_{i=1}^n \int_{\mathcal X} f_{k}(x_k)  K^{X_i}_h(X_i-x) \mathrm dx =0 \right\},\\
	\mathcal H_{k'} &= \left\{ f \in \mathcal H  | \ f^{i,j}(x)\equiv 0 \ \text{ for }\ j \not =k,  f^{i,k}(x)=f^{(1)}_k(x_k)  \text{ for some  univariate}\right .\\
	& \left . \qquad  \text{function } f^{(1)}_k: \mathcal X_k \to \mathbb R  \right\}
	\end{align*}
	for $k=1,\dots,d$ and $k':=k+d$.
	Using these definitions we have $\mathcal H_{add}=\sum_{j=0} ^{2d} \mathcal H_j$ with $\mathcal H_j\cap \mathcal H_k=\{0\}, j\neq k$. In particular, the functions $f_j$ in \eqref{eq:add} are unique elements in $\mathcal H_j, j=0,\dots,2d.$
	For $k=0,\dots,2d$ we denote the orthogonal projection of $ \mathcal H$ onto the space $\mathcal H_k$ 
	by $\mathcal P_k$.
	Note that for $k=0,\dots,d$ the operators $\mathcal P_k$ set all components of an element $f = (f^{i,j})_ {i=1,\dots,n;\ j=0,\dots,d}\in\mathcal{H}$ to zero except the components with indices $(i,0), i=1,\dots,n$. Furthermore, for $k=d+1,\dots, 2d$, only components with index $(i,k-d)$  are not set to zero.
	Because $\mathcal H_0 $ is orthogonal to $\mathcal H_k$ for $k=1,\dots,d$, the orthogonal projection onto the space $\mathcal H_k$ is  given by $\mathcal P_k = P_k - \mathcal P_0$ where $P_k$ is the projection onto $\mathcal H_0+\mathcal  H_k$. In Appendix \ref{sec: projection operators} we will state explicit formulas for the orthogonal projection operators.\newline
	The operators $\mathcal P_k $ can be used to define an iterative algorithm for the approximation of $\hat m$. For an explanation observe that $\hat m$ is the projection of $Y$ onto $\mathcal H_{add}$ and $\mathcal H_k$ is a linear subspace of $\mathcal H_{add}$. Thus $\mathcal{P}_k(Y) = \mathcal{P}_k(\hat m) $ holds for $k=0,\dots, 2d$. This gives
	\begin{equation} \label{eq:back} \mathcal P_k(Y) =\mathcal  P_k(\hat m) = \mathcal  P_k\left ( \sum_{j=0} ^{2d} \hat m_j\right) =  \hat m_k + \mathcal  P_k\left( \sum_{j\not = k}  \hat m_j\right)\end{equation} 
	or, equivalently,
	$$\hat m_k = \mathcal P_k (Y) - \sum_{j\not = k} \mathcal  P_k(  \hat m_j)= P_k (Y) - \bar Y - \sum_{j\not = k} \mathcal  P_k(  \hat m_j),$$	
	where $\bar Y=\mathcal P_0 (Y)=P_0 (Y)$ is the element of   $\mathcal H$ with $(\bar Y)^{i,0}\equiv \frac 1 n \sum_{i=1}^n Y^i$,   $(\bar Y)^{i,j}\equiv 0$ for $j\geq1$. 
	This equation inspires an iterative algorithm where in each step approximations $\hat m^{old}_k$ of  $\hat m_k$ are updated by 
	$$\hat m_k^{new} =  P_k (Y) -\bar Y - \sum_{j\not = k} \mathcal  P_k(  \hat m^{old}_j).$$

	\begin{algorithm}
		\caption{Smooth Backfitting algorithm} 
		\begin{algorithmic}[1]
			\State \textbf{Start:} $\hat m_k(x_k)\equiv 0, \widetilde m_k =\mathcal P_k( Y), error=\infty$ \Comment{$k=0,\dots,2d$}
			\While {$error>tolerance$}
			\State $error \gets 0$
			\For{$k=0,\dots,2d$}
			\State $\hat m_k^{old} \gets \hat m_k$
			\State $\hat m_k \gets  \widetilde m_k  - \sum_{j\not = k} \mathcal  P_k(  \hat m_j)$ 			
			\State $error \gets error + | \hat m_k - \hat m_k^{old}|$
			\EndFor\label{euclidendwhile}
			\EndWhile
			\State \textbf{return} $\hat m=(\hat m_0, \hat m_1,\dots, \hat m_{2d})$
		\end{algorithmic}
	\end{algorithm}

	Algorithm 1 provides a compact definition of our algorithm for the approximation of $\hat m$. In each iteration step, either $\hat m_j$ or $ \hat m_j^{(1)}$ is updated for some $j=1,\dots, d$. This is different from the algorithm proposed in \cite{mammen1999existence} where in each step  a  function tuple  $(\hat m_j, \hat m_j^{(1)})$ is updated. For the orthogonal projections of functions $m \in \mathcal H_{add} $ one can use simplified formulas. They will be given  in Appendix \ref{sec: projection operators}.
	Note that $\tilde m_k, k=0,\dots,2d$ only needs to be calculated once at the beginning. Also the marginals 
	$p_k(x_k)$, $p^\ast_k(x_k)$, $p^{\ast \ast}_k(x_k)$, $p_{jk}(x_j,x_k)$, $p^{\ast}_{jk}(x_j, x_k)$ and $p^{\ast \ast}_{jk}(x_j,x_k)$ which are needed in the evaluation of $\mathcal P_k$ only need to be calculated once at the beginning. Precise definitions of these marginals can be found in the following sections. In each iteration of the for-loop in line 4 of Algorithm 1, $O(d \times n \times  gs)$ calculations are performed. Hence for a full cycle, the algorithm needs $O(d^2 \times n \times gs\times \log (1/ \mathrm{tolerance}))$ calculations. Here $\mathrm{grid.size}$ is the number of evaluation points for each coordinate $x_k$. \newline

	Existence and uniqueness of the local linear smooth backfitting estimator will be discussed in the next section. Additionally, convergence of the proposed iterative algorithm will be shown.

	\section{Existence and  uniqueness of the estimator,  convergence of the algorithm} \label{sec:exist}
 In this section we will establish conditions for existence and uniqueness of the local linear smooth backfitting estimator $\widehat m$. Afterwards we will discuss convergence of the iterative algorithm provided in Algorithm 1. Note that convergence is shown for arbitrary starting values, i.e., we can set $\hat{m}_k(x_k)$ to values other than zero in step 1 of Algorithm 1. For these statements we require the following weak condition on the kernel.
	\begin{enumerate}
		\item[(A1)]    
		The kernel $k$ has support $[-1,1]$. Furthermore, $k$ is strictly positive on $(-1,1)$ and continuous on $\mathbb R$.
	\end{enumerate}

	For $k=1,\dots,d$ and $x\in \mathbb{R}^d$ we write
	\[
	x_{-k}:=(x_1,\dots,x_{k-1},x_{k+1},\dots,x_d).
	\]
	In the following, we  will show that our claims hold on the following event:
	\begin{align*}
	\mathcal E &= \bigg \{ \text{ For } k=1,\dots,d \text{ and } x_k\in \overline{\mathcal {X}}_k  \text{ there exist two observations}\  i_1,i_2 \in \{1,\dots,n\} \\ 
	& \text{such that} 
	\   X_{i_1,k}  \not = X_{i_2,k}\ ,\ |X_{i,k}  - x_k| < h_k, (x_k,X_{i,-k})\in\overline{\mathcal{X}}\ \text{for}\ i=i_1,i_2.\\ 
	&\text{Furthermore, there exist no } b_0,\dots,b_d \in \mathbb R  \text{ with } b_0 +
	\sum_{j=1} ^d b_j X_{ij} = 0\ \forall i=1,\dots,n
	\bigg  \},
	\end{align*}
	where $\overline{\mathcal{X}}_k$ is the closure of $\mathcal{X}_k$ and by a slight abuse of notation $$(x_k,X_{i,-k}):=(X_{i,1},\dots, ,X_{i,k-1},x_k,X_{i,k+1},\dots,,X_{i,d}).$$
	Throughout this paper, we require the following definitions.
	\begin{eqnarray*} 
		\hat p_k(x_k) &=& \frac  1 n \sum_{i=1} ^n \int _{\mathcal X_{-k}(x_k) } K^{X_i}_h(X_i-x)\mathrm dx_{-k}, \\
		\hat p^{*}_k(x_k) &=& \frac  1 n \sum_{i=1} ^n \int _{\mathcal X_{-k}(x_k) } (X_{ik}-x_k)  K^{X_i}_h(X_i-x)\mathrm dx_{-k},\\
		\hat p^{**}_k(x_k) &=& \frac  1 n \sum_{i=1} ^n \int _{\mathcal X_{-k}(x_k) } (X_{ik}-x_k) ^2 K^{X_i}_h(X_i-x)\mathrm dx_{-k},
	\end{eqnarray*}
	where ${\mathcal X_{-k}(x_k) }=\{ u_{-k}\ |\ (x_k,u_{-k}) \in \mathcal X \}$.

	\begin{lemma} \label{lem:uni Hadd}
		Make Assumption (A1). Then, on the event $\mathcal E$ it holds that $\|f\|_n =0$ implies $f_0 =0$ as well as $f_j\equiv 0$ almost everywhere for $j=1\dots 2d$ and all $f \in \mathcal H_{add}$. 	\end{lemma}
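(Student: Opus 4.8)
The plan is to exploit that, for $f\in\mathcal H_{add}$, the squared norm $\|f\|_n^2$ is the integral of a nonnegative integrand against the nonnegative kernel, so that $\|f\|_n=0$ forces that integrand to vanish almost everywhere on the support of the kernel. First I would write out $\|f\|_n^2$ for $f\in\mathcal H_{add}$, i.e.
\[
\|f\|_n^2=n^{-1}\sum_{i=1}^n\int_{\mathcal X}\Big\{f_0+\sum_{j=1}^d f_j(x_j)+\sum_{j=1}^d f_j^{(1)}(x_j)(X_{ij}-x_j)\Big\}^2 K_h^{X_i}(X_i-x)\,dx.
\]
Under (A1) the weight $K_h^{X_i}(X_i-x)$ is strictly positive exactly on $U_i:=\mathcal X\cap\prod_{j}(X_{ij}-h_j,X_{ij}+h_j)$, so $\|f\|_n=0$ is equivalent to
\begin{equation}
f_0+\sum_{j=1}^d f_j(x_j)+\sum_{j=1}^d f_j^{(1)}(x_j)(X_{ij}-x_j)=0\qquad\text{for a.e. }x\in U_i,\ i=1,\dots,n.\tag{$\star$}
\end{equation}
Everything then reduces to showing that $(\star)$, together with the event $\mathcal E$ and the norming constraint \eqref{eq:constr}, forces all components to vanish.

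Next I would fix a coordinate $k$ and isolate the dependence on $x_k$. For fixed $x_{-k}$, $(\star)$ reads $f_k(x_k)+f_k^{(1)}(x_k)(X_{ik}-x_k)=-C_i(x_{-k})$, where $C_i(x_{-k})$ collects the remaining terms and does not depend on $x_k$. A Fubini argument then shows that, for each $i$, the univariate function $\phi_i(x_k):=f_k(x_k)+f_k^{(1)}(x_k)(X_{ik}-x_k)$ is a.e. equal to a constant on each non-degenerate slice; since $\phi_i$ does not depend on $x_{-k}$, overlapping slices force a common value, so $\phi_i$ is a.e. constant on a neighbourhood of any point where observation $i$'s kernel is active. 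Now fix $x_k^0\in\overline{\mathcal X}_k$ and take the two observations $i_1,i_2$ provided by $\mathcal E$: both satisfy $|X_{i,k}-x_k^0|<h_k$ and $(x_k^0,X_{i,-k})\in\overline{\mathcal X}$, so $\phi_{i_1}$ and $\phi_{i_2}$ are a.e. constant on a common neighbourhood $N$ of $x_k^0$. Subtracting the two relations on $N$ and using $X_{i_1,k}\neq X_{i_2,k}$ gives $f_k^{(1)}\equiv\text{const}$ a.e. on $N$. As $x_k^0$ is arbitrary and $\mathcal X_k$ (the projection of the connected set $\mathcal X$) is an interval, gluing these local constants yields $f_k^{(1)}\equiv b_k$ a.e. for a global $b_k$; substituting back into $\phi_{i_1}=\text{const}$ gives $f_k(x_k)=c_k+b_k x_k$ a.e. for a constant $c_k$.

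Finally I would substitute $f_k^{(1)}\equiv b_k$ and $f_k(x_k)=c_k+b_kx_k$ back into $(\star)$. The crucial feature of the local-linear construction is that the terms $b_k x_k$ and $-b_k x_k$ cancel, leaving the purely data-dependent identity $b_0+\sum_{k=1}^d b_k X_{ik}=0$ for all $i$, with $b_0:=f_0+\sum_k c_k$. The second part of $\mathcal E$ states exactly that no nontrivial affine relation holds among the design points, so $b_1=\dots=b_d=0$ and $b_0=0$; in particular $f_k^{(1)}\equiv 0$ a.e. and each $f_k\equiv c_k$ is constant. It then remains to use the norming constraint \eqref{eq:constr}: since $f_k$ is constant and $\int_{\mathcal X}K_h^{X_i}(X_i-x)\,dx=1$ for every $i$ by the normalisation of the boundary-corrected kernel, \eqref{eq:constr} reduces to $n\,c_k=0$, whence $c_k=0$ for each $k$. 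Then $b_0=f_0+\sum_k c_k=0$ gives $f_0=0$, and recalling $f_{j+d}=f_j^{(1)}$ we obtain $f_j\equiv 0$ a.e. for $j=1,\dots,2d$ and $f_0=0$, as claimed.

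I expect the main obstacle to be the measure-theoretic reduction in the second step: passing from the $d$-dimensional almost-everywhere identity $(\star)$ to genuine one-dimensional statements about $\phi_{i_1},\phi_{i_2}$ that may be subtracted on a common positive-measure neighbourhood of $x_k^0$. This is precisely where the three ingredients of the first part of $\mathcal E$ enter—$X_{i_1,k}\neq X_{i_2,k}$ to make the subtraction informative, $|X_{i,k}-x_k^0|<h_k$ to keep both kernels active near $x_k^0$, and $(x_k^0,X_{i,-k})\in\overline{\mathcal X}$ to guarantee that the relevant slices are non-degenerate and accumulate at $x_k^0$—and where connectedness of $\mathcal X_k$ is needed to globalise the local constants. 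The remaining steps (the cancellation of the linear terms, the general-position argument, and the normalisation constraint) are then routine.
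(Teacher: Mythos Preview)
Your proposal is correct and follows essentially the same route as the paper's proof: both start from the pointwise vanishing of the integrand on the support of the kernel, isolate one coordinate, use the two observations supplied by $\mathcal E$ (with distinct $k$-th coordinates) to show $f_k^{(1)}$ is locally constant and $f_k$ locally affine, globalise via connectedness of $\mathcal X_k$, substitute back to obtain the affine relation $b_0+\sum_k b_k X_{ik}=0$ killed by the second part of $\mathcal E$, and finish with the norming constraint. The only cosmetic difference is that the paper passes to the one-dimensional identity by directly specialising $x_{-k}=X_{i,-k}$ (and organises the local neighbourhoods as sets $M_{i_1,i_2}$ indexed by pairs of observations), whereas you phrase the same reduction as a Fubini / separation-of-variables argument for $\phi_i(x_k)=-C_i(x_{-k})$; the latter is arguably a cleaner way to handle the ``almost everywhere'' issue you correctly flag as the delicate point.
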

	
	One can easily see that the lemma implies the following. On the event $\mathcal{E}$, if a minimizer $\hat m=\hat m_0+\cdots +\hat m_{2d}$ of ${\norm{Y-f}}_n$ over $f \in \mathcal H_{add}$ exists, the components $\hat m_0,\dots, \hat m_{2d}$ are uniquely determined: Suppose there exists another minimizer $\tilde m \in \mathcal H_{add}$. Then it holds that $\langle Y- \hat m, \hat m - \tilde m\rangle_n = 0$ and $\langle Y- \tilde m, \hat m - \tilde m\rangle_n = 0$ which gives $\|\hat m - \tilde m\|_n =0$. An application of the lemma yields uniqueness of the components  $\hat m_0,\dots, \hat m_{2d}$.
	\begin{figure}
		\centering
		\includegraphics[width=5cm]{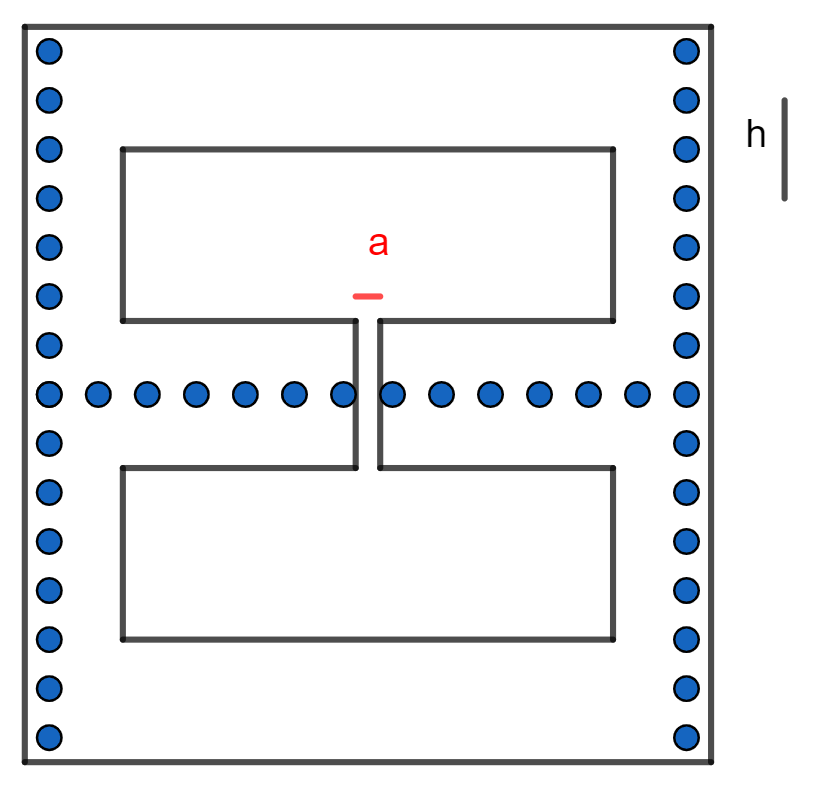}
		\caption{An example of a possible data set $\mathcal{X}\subseteq\mathbb{R}^2$ including data points where the conditions of the event $\mathcal E$ are not satisfied and where the components of functions $f\in \mathcal H_{add}$ are not identified. The data is visualized by blue dots. The size of the parameter $h=h_1=h_2$ is showcased on the right hand side. For explanatory reasons, the interval $a$ is included.} \label{fig:addsmooth}
	\end{figure}
	\begin{remark}
		In Figure 1 we give an example where a set $\mathcal X$ and data points $X_i$  do not belong to the event $\mathcal E$ and where the components of the function $f\in \mathcal H_{add}$ are not identified.
		Note that in this example for all $k=1,2$ and $x_k\in\overline{\mathcal{X}_k}$ there exist $i_1,i_2\in\{1,\dots,n\}$ such that$X_{i_1}\neq X_{i_2}$ and  $|X_{i,k}-x_k|<h$, for $i=i_1,i_2$. However, for $x_1\in a$ the condition $(x_1,X_{i,2})\in\overline{\mathcal{X}}$ is not fulfilled for any $i=1,\dots,n$ with $|X_{i,1}-x_1|<h$. Therefore, $K_h^{X_i}(X_i-x)=0$ for all $x\in\overline{\mathcal{X}}$ with $x_1\in a$. Thus, any function satisfying $f\in\mathcal{H}_1$ with $f_1(x)=0$ for $x\in\mathcal{X}_1\backslash\{a\}$ has the property $\norm{f}_n=0$.  
	\end{remark}
	
	\begin{proof} [of Lemma \ref{lem:uni Hadd}]
		
		First, for each pair $i_1,i_2=1,\dots,n$ define the set
		\[
		M_{i_1,i_2}:=\{x_1\in\mathcal{X}_1\ |\ |X_{i,1}  - x_1| < h, (x_1,X_{i,-1})\in\mathcal{X}\ \text{for}\ i=i_1,i_2\}
		\]
		if $X_{i_1,1}\neq X_{i_2,1}$ and $M_{i_1,i_2}=\emptyset$ otherwise.
		It is easy to see that $M_{i_1,i_2}$ is open as an intersection of open sets. Note that on the event $\mathcal E$ we have
		\begin{equation} \label{eq:Ueberdeckung}
		\bigcup_{i_1,i_2}M_{i_1,i_2}=\mathcal{X}_1.
		\end{equation}
		Now, suppose that for some $f \in \mathcal H_{add}$ we have $\|f\|_n =0$. We want to show that $f_0 =0$ and that $f_j \equiv 0$ for $j=1,\dots,2d$. From $\|f\|_n =0$ we obtain
		$$ \bigg \{ f_0 + \sum_{j=1} ^d f_j(x_j) + \sum_{j=1} ^d f_{j'}(x_j) (X_{ij}-x_j)\bigg \}^2 K^{X_i}_h(X_i -x) = 0$$
		for $i=1,\dots, n$ and almost all $x \in \mathcal X$. Let $i_1,i_2\in\{1,\dots,n\}$. Then
		\begin{equation} \label{eq:Haddhelp} f_0 + f_1(x_1) + \sum_{j=2} ^d f_j(X_{ij}) + f_{d+1}(x_{1}) (X_{i1}-x_1) =0\end{equation} 
		holds for all $i=i_1,i_2$ and $x_1\in M_{i_1,i_2}$ almost surely. By subtraction of Equation \eqref{eq:Haddhelp} for $i=i_1$ and $i=i_2$ we receive
		$$f_{d+1}(x_{1}) = v_1$$
		with constant $v_1 = -  \sum_{j=2} ^d (f_j(X_{i_1,j}) - f_j(X_{i_2,j})) / (X_{i_1,1} - X_{i_2,1} )$ for $x_1\in M_{i_1,i_2}$. 
		Furthermore, by using \eqref{eq:Haddhelp} once again we obtain
		$$f_{1}(x_{1}) = u_1 + v_1 x_1$$
		with another constant $u_1 \in \mathbb R$. Following \eqref{eq:Ueberdeckung}, since $\mathcal{X}_1$ is connected and the sets $M_{i_1,i_2}$ are open we can conclude
		$$f_{d+1}(x_{1}) = v_1 \text{ and } f_{1}(x_{1}) = u_1 + v_1 x_1$$
		for almost all $x_1 \in \mathcal X_1$ since the sets must overlap. Similarly one shows $$f_{j'}(x_{j}) = v_j \text{ and } f_{j}(x_{j}) = u_j + v_j x_j$$
		for $j =2,\dots, d$ and almost all $x_j \in \mathcal X_j$.  We conclude that  
		\begin{eqnarray*} 0 &=& \|f\|_n ^2 \\
			&=& \frac 1 n \int \sum_{i=1} ^n \bigg \{ f_0 + \sum_{j=1} ^d f_j(x_j) + \sum_{j=1} ^d f_{j'}(x_j) (X_{ij}-x_j)\bigg \}^2 K^{X_i}_h(X_i -x) \mathrm d x\\
			&=& \frac 1 n  \int \sum_{i=1} ^n \bigg \{ f_0 + \sum_{j=1} ^d u_j + \sum_{j=1} ^d v_jX_{ij}\bigg \}^2 K^{X_i}_h(X_i -x) \mathrm d x\\
			&=&  \bigg \{ f_0 + \sum_{j=1} ^d u_j + \sum_{j=1} ^d v_jX_{ij}\bigg \}^2 .
		\end{eqnarray*}
		On the event $\mathcal E $ the covariates $X_i $ do not lie in a linear 
		subspace of $ \mathbb R^d$. This shows $v_j=0$ for $1\leq j\leq d$. Thus $f_j \equiv 0$ for $d+1 \leq j \leq 2d$ and $f_j = u_j$ for $1 \leq j \leq d$.\newline
		
		\noindent Now, $\int f_j(x_j) \hat p_j(x_j) \mathrm d x_j = 0$ implies that $f_j \equiv 0$ for $1 \leq j \leq d$ and $f_0=0$. This concludes the proof of the lemma.
	\end{proof}

	Existence and uniqueness of $\widehat m$ on the event $\mathcal E$ under Assumption (A1) follows immediately from the following lemma.
	
	\begin{lemma} \label{lem:Hadd closed}
		Make Assumption (A1). Then, on the event $\mathcal E$, for every $D\subseteq\{0,\dots,2d\}$ the linear space 
		$\sum_{k\in D}\mathcal{H}_k$ is a closed subset of $\mathcal H$. In particular, $\mathcal H_{add}$ is closed.
	\end{lemma}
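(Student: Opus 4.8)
\emph{Plan.} The plan is to show that each $V:=\sum_{k\in D}\mathcal H_k$ is \emph{complete} with respect to $\norm{\cdot}_n$, which yields closedness; the assertion on $\mathcal H_{add}$ is then the case $D=\{0,\dots,2d\}$. First I would record that on $\mathcal E$ the marginals $\hat p_k$ and $\hat p^{**}_k$ are continuous and strictly positive on the compact set $\overline{\mathcal X}_k$ (for each $x_k$ the event supplies two contributing observations, at least one with $X_{ik}\neq x_k$), hence bounded below by a positive constant. Thus $\norm{\cdot}_n$ restricts to a genuine norm on each $\mathcal H_k$, making it a Hilbert space isometric to a (possibly constraint‑restricted) weighted $L^2$‑space. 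Identifying $V$ with the range of the summation map $\Sigma\colon W\to\mathcal H$, $\Sigma\big((f_k)_{k\in D}\big)=\sum_{k\in D}f_k$, on the external direct sum $W=\bigoplus_{k\in D}\mathcal H_k$, it then suffices to produce a constant $c>0$ with
\[
\Big\lVert\sum_{k\in D}f_k\Big\rVert_n^2\ \ge\ c\sum_{k\in D}\norm{f_k}_n^2,\qquad f_k\in\mathcal H_k .
\]
This coercivity turns any $\norm{\cdot}_n$-Cauchy sequence in $V$ into a componentwise Cauchy sequence in the complete space $W$, whose limit is carried by the bounded operator $\Sigma$ to a limit in $V$.

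The coercivity inequality is the heart of the matter, and I expect it to be the main obstacle, since in infinite dimensions injectivity of $\Sigma$ alone does not force a uniform lower bound. I would obtain it by separating within‑coordinate from cross‑coordinate interactions. Grouping the spaces by coordinate, I bundle $\mathcal H_k$ and $\mathcal H_{k+d}$ into one block $B_k$ and keep $\mathcal H_0$ as a one‑dimensional block. On $B_k$ the form $\norm{\cdot}_n^2$ is, pointwise in $x_k$, the quadratic form of the local moment matrix
\[
M_k(x_k)=\begin{pmatrix}\hat p_k(x_k) & \hat p^{*}_k(x_k)\\[2pt] \hat p^{*}_k(x_k) & \hat p^{**}_k(x_k)\end{pmatrix}.
\]
Its determinant $\hat p_k\hat p^{**}_k-(\hat p^{*}_k)^2$ is a Cauchy--Schwarz defect for the local design points $X_{ik}$, so on $\mathcal E$, where two contributing observations have distinct $X_{ik}$, it is strictly positive; continuity and compactness then give $M_k(x_k)\succeq c_k I$ uniformly. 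Hence each block form is coercive and equivalent to a weighted $L^2$ product norm, so the block‑diagonal part $\tilde D$ of the Gram operator on $\bigoplus_k B_k$ is bounded and bounded below.

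It remains to absorb the off‑block couplings. Every cross‑coordinate inner product $\innerproduct{f_k}{g_l}_n$ between distinct coordinates is an integral operator whose kernel is a two‑dimensional marginal of the product kernel, possibly weighted by the bounded factors $X_{ik}-x_k$; these kernels are bounded on a bounded domain, hence Hilbert--Schmidt and compact, while the couplings involving $\mathcal H_0$ are finite rank. Writing the full Gram operator as $\tilde G=\tilde D+\tilde R$ with $\tilde R$ compact self‑adjoint, I factor $\tilde G=\tilde D^{1/2}\big(I+\tilde D^{-1/2}\tilde R\,\tilde D^{-1/2}\big)\tilde D^{1/2}$, where $\tilde K:=\tilde D^{-1/2}\tilde R\,\tilde D^{-1/2}$ is compact and self‑adjoint because $\tilde D^{\pm1/2}$ are bounded. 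Lemma~\ref{lem:uni Hadd} says precisely that $\norm{\sum_{k\in D}f_k}_n=0$ forces every component to vanish, i.e.\ $\tilde G$ is injective, and since $\tilde D^{1/2}$ is invertible this makes $I+\tilde K$ injective. By the Fredholm alternative $I+\tilde K$ is then boundedly invertible, hence bounded below, which transfers to $\tilde G$ and produces the constant $c>0$. This shows $V$ is complete and therefore closed, and $D=\{0,\dots,2d\}$ gives $\mathcal H_{add}$ closed. The delicate point throughout is passing from the pointwise non‑degeneracy on $\mathcal E$ and the qualitative injectivity of Lemma~\ref{lem:uni Hadd} to a \emph{uniform} bound, for which the compactness of the cross‑coordinate operators, and with it the Fredholm step, is indispensable.
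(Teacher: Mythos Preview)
Your proposal is correct and uses the same three ingredients as the paper --- within-coordinate coercivity of the local moment matrix $M_k$ via the strict Cauchy--Schwarz defect on $\mathcal E$, compactness (Hilbert--Schmidt) of the cross-coordinate couplings, and the injectivity input from Lemma~\ref{lem:uni Hadd} --- but organizes them differently. The paper first proves $\mathcal H_k+\mathcal H_{k'}$ closed by an explicit inequality $\|f+g\|_n^2\ge(1-R)(\|f\|_n^2+\|g\|_n^2)$ with $R<1$, and then runs an induction: at each step it appends one more block $\mathcal H_k+\mathcal H_{k'}$ to an already-closed partial sum, checking that the orthogonal projection between the two pieces is Hilbert--Schmidt and invoking Proposition~\ref{lprop:Hadd closed help1} (closed $+$ closed with compact projection $\Rightarrow$ closed sum). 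You instead assemble the full Gram operator $\tilde G=\tilde D+\tilde R$ on the external direct sum, with $\tilde D$ block-diagonal and coercive and $\tilde R$ compact, and apply the Fredholm alternative to $I+\tilde D^{-1/2}\tilde R\,\tilde D^{-1/2}$ in one shot. Your route is slightly more direct and yields the global coercivity constant immediately; the paper's inductive route is more modular and avoids writing down the full Gram operator, at the cost of some bookkeeping over the index sets $D_1,D_2$. Both arguments are essentially the same spectral mechanism (compact perturbation of an invertible operator, with injectivity supplied by Lemma~\ref{lem:uni Hadd}) viewed through different lenses.
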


	For the proof of this lemma we make use of some propositions introduced below. In the following, we consider sums $L=L_1 + L_2$ of closed subspaces $L_1$ and $ L_2$ of a Hilbert space with $L_1\cap L_2=\{0\}$. In this setup, an element $g\in L$ has a unique decomposition $g=g_1+g_2$ with $g_1\in L_1$ and $g_2\in L_2$. Thus, the projection operator from $L$ onto $L_1$ along $L_2$ given by 
	\begin{align*}
	\Pi_1(L_2):L\rightarrow L_1,\ \Pi_1(L_2)(g)=g_1
	\end{align*}
	is well defined. 
	
	\begin{proposition} 
		For the sum $L=L_1 + L_2$ of two closed subspaces $L_1$ and $L_2$ of a Hilbert space with $L_1\cap L_2=\{0\}$,
		the following conditions are equivalent
		\begin{enumerate}
			\item[(i)] $L$ is closed.
			\item[(ii)] There exists a constant $c>0$ such that for every $g=g_1+g_2\in L$ with $g_1 \in L_1$ and $g_2 \in L_2$ we have
			\begin{equation}\label{cond2:closed}
			\|g\| \geq c \max \{\|g_1\|,  \|g_2\|\}.
			\end{equation}
			\item[(iii)] The projection operator $\Pi_1(L_2)$ from $L$ onto $L_1$ along $L_2$ is bounded. 
			\item[(iv)] The gap from  $L_1$ to $L_2$ is greater than zero, i.e.,
			\[\gamma(L_1,L_2):=
			\inf_{g_1 \in L_1} \frac{dist(g_1,L_2)}{\norm{g_1}} >0,
			\]
			where $\mathrm{dist}(f,V):= \inf_{h \in V} \norm{f-h}$ with the convention $0/0=1$.
		\end{enumerate}
		\label{lprop:Hadd}
	\end{proposition}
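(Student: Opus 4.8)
The plan is to treat the three quantitative conditions $(ii)$, $(iii)$, $(iv)$ as essentially equivalent reformulations of one another, and then to connect them to the topological condition $(i)$, where the substance of the argument lies. I would start with $(ii)\Leftrightarrow(iii)$: by definition $\Pi_1(L_2)$ sends $g=g_1+g_2$ to $g_1$, so its boundedness means $\norm{g_1}\le C\norm{g}$ for some $C>0$ and all $g\in L$, which is precisely the $\norm{g_1}$ part of $(ii)$; the $\norm{g_2}$ part then comes for free from $g_2=g-g_1$, giving $\norm{g_2}\le(1+C)\norm{g}$, and the converse implication is immediate. For $(ii)\Leftrightarrow(iv)$ I would observe that, for fixed $g_1\in L_1$, the elements $g_1-g_2$ with $g_2\in L_2$ are exactly the elements of $L$ whose (unique) $L_1$-component equals $g_1$, so that $\mathrm{dist}(g_1,L_2)=\inf_{g_2\in L_2}\norm{g_1+g_2}$. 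Hence $\gamma(L_1,L_2)>0$ says exactly that $\norm{g_1+g_2}\ge\gamma\norm{g_1}$ for every decomposition, again the $\norm{g_1}$ part of $(ii)$, with the convention $0/0=1$ covering $g_1=0$ and the $\norm{g_2}$ bound following as before.

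The implication $(ii)\Rightarrow(i)$ is a standard completeness argument. Given a sequence $g^{(n)}=g_1^{(n)}+g_2^{(n)}\in L$ converging in the ambient Hilbert space to some $g$, the sequence $(g^{(n)})$ is Cauchy, and the inequality in $(ii)$ forces both $(g_1^{(n)})$ and $(g_2^{(n)})$ to be Cauchy as well. Since $L_1$ and $L_2$ are closed, they converge to limits $g_1\in L_1$ and $g_2\in L_2$, whence $g=g_1+g_2\in L$, so $L$ is closed.

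The main obstacle, and the only place where completeness is used in an essential way, is $(i)\Rightarrow(iii)$. Here I would consider the addition map $\Phi:L_1\times L_2\to L$, $\Phi(g_1,g_2)=g_1+g_2$, where $L_1\times L_2$ carries the norm $\sqrt{\norm{g_1}^2+\norm{g_2}^2}$. Because $L_1\cap L_2=\{0\}$ and $L=L_1+L_2$, the map $\Phi$ is a linear bijection, and it is clearly bounded. If $(i)$ holds then $L$ is a closed subspace of a Hilbert space, hence a Banach space, and $L_1\times L_2$ is complete as a product of two complete spaces; the bounded inverse theorem then yields that $\Phi^{-1}$ is bounded. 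Reading off its $L_1$-coordinate gives the boundedness of $\Pi_1(L_2)$, which is $(iii)$. Together with $(ii)\Leftrightarrow(iii)\Leftrightarrow(iv)$ and $(ii)\Rightarrow(i)$, this closes the cycle $(i)\Rightarrow(iii)\Rightarrow(ii)\Rightarrow(i)$ and establishes the equivalence of all four conditions.
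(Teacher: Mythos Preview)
Your argument is correct and follows the same overall architecture as the paper's proof: the quantitative conditions $(ii)$, $(iii)$, $(iv)$ are shown to be reformulations of one another, $(ii)\Rightarrow(i)$ is the standard Cauchy-sequence argument, and the only substantial step $(i)\Rightarrow(iii)$ is handled via the open mapping theorem. The two proofs differ in organization at two points. First, for $(i)\Rightarrow(iii)$ the paper applies the closed graph theorem directly to $\Pi_1$, checking that convergence of $g^{(n)}$ and $\Pi_1 g^{(n)}$ forces $\Pi_1 g = \lim \Pi_1 g^{(n)}$; you instead invoke the bounded inverse theorem on the addition map $\Phi:L_1\times L_2\to L$, which is arguably cleaner since it delivers boundedness of both projections at once. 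Second, the paper links $(iv)$ to $(iii)$ through the identity $\|\Pi_1\| = 1/\gamma(L_1,L_2)$, whereas you link $(iv)$ to $(ii)$ by unpacking $\mathrm{dist}(g_1,L_2)$ as $\inf_{g_2}\|g_1+g_2\|$; both routes are immediate. Neither variation changes the substance, and your proof is complete as written.
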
 
	\begin{remark}
		A version of Proposition \ref{lprop:Hadd} is also true if $L_1\cap L_2\neq\{0\}$. In this case, the quantities involved need to be identified as objects in the quotient space $L/(L_1\cap L_2)$.
	\end{remark}
	
	\begin{proposition} \label{lprop:Hadd closed help1}
		The sum $L=L_1 + L_2$ of two closed subspaces $L_1$ and $L_2$ of a Hilbert space with $L_1\cap L_2=\{0\}$ is closed if the orthogonal projection of $L_2$ on $L_1$ is compact.\end{proposition}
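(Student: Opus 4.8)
The plan is to deduce closedness of $L=L_1+L_2$ from the characterization already established in Proposition \ref{lprop:Hadd}; concretely, I would verify its condition (ii), i.e. the existence of a constant $c>0$ with $\norm{g_1+g_2}\ge c\max\{\norm{g_1},\norm{g_2}\}$ for all $g_1\in L_1$ and $g_2\in L_2$. Write $Q$ for the orthogonal projection of the ambient Hilbert space onto $L_1$, so that the hypothesis reads: $Q|_{L_2}:L_2\to L_1$ is compact. Recalling that $\mathrm{dist}(g_2,L_1)=\norm{(I-Q)g_2}$, the crucial quantity is
$$\delta:=\inf\{\norm{(I-Q)g_2}\ :\ g_2\in L_2,\ \norm{g_2}=1\},$$
and the entire argument will reduce to showing $\delta>0$.

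The first and routine step is the reduction itself. For arbitrary $g_1\in L_1$ and $g_2\in L_2$, since $-g_1\in L_1$ we have $\norm{g_1+g_2}\ge\mathrm{dist}(g_2,L_1)\ge\delta\norm{g_2}$ by homogeneity of the distance to a subspace. Combining this with the triangle inequality $\norm{g_1}\le\norm{g_1+g_2}+\norm{g_2}$ gives $\norm{g_1+g_2}\ge\frac{\delta}{1+\delta}\norm{g_1}$, so that condition (ii) holds with $c=\delta/(1+\delta)$ as soon as $\delta>0$ is known. I would treat these manipulations as elementary and not dwell on them.

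The main obstacle, and the only point where the compactness hypothesis is actually used, is establishing $\delta>0$. I would argue by contradiction: if $\delta=0$, choose $g_2^{(n)}\in L_2$ with $\norm{g_2^{(n)}}=1$ and $\norm{(I-Q)g_2^{(n)}}\to 0$. Being a bounded sequence, it has a weakly convergent subsequence $g_2^{(n_k)}\rightharpoonup f$, and since $L_2$ is closed (hence weakly closed) we get $f\in L_2$. Compactness of $Q|_{L_2}$ upgrades this weak convergence to strong convergence $Qg_2^{(n_k)}\to Qf$; writing $g_2^{(n_k)}=Qg_2^{(n_k)}+(I-Q)g_2^{(n_k)}$ and using $\norm{(I-Q)g_2^{(n_k)}}\to 0$ then forces $g_2^{(n_k)}\to Qf$ strongly. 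In particular the strong and weak limits coincide, so $f=Qf\in L_1$, whence $f\in L_1\cap L_2=\{0\}$. But strong convergence yields $\norm{g_2^{(n_k)}}\to\norm{f}=0$, contradicting $\norm{g_2^{(n_k)}}=1$. Therefore $\delta>0$, and the lemma follows. The decisive ingredients are the weak sequential compactness of bounded sets in a Hilbert space together with the weak-to-strong bridging supplied by the compactness assumption; this is precisely what keeps the angle between $L_1$ and $L_2$ from degenerating and thus prevents the sum from failing to close up.
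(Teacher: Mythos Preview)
Your proof is correct. Both you and the paper reduce the claim to showing that the gap between $L_1$ and $L_2$ is strictly positive and then invoke Proposition~\ref{lprop:Hadd}, but the arguments for positivity differ. The paper first passes through the identity $\gamma(L_1,L_2)^2=1-\norm{\mathcal P_2\mathcal P_1}^2$ (its Lemma~\ref{lemma gamma}) and then argues spectrally: the composition $\mathcal P_2\mathcal P_1$ is compact, hence has only finitely many eigenvalues above any positive threshold, and since $L_1\cap L_2=\{0\}$ rules out the eigenvalue $1$, one gets $\norm{\mathcal P_2\mathcal P_1}<1$. You instead run a direct sequential contradiction: a hypothetical minimizing sequence on the unit sphere of $L_2$ has a weakly convergent subsequence, compactness of $Q|_{L_2}$ upgrades this to strong convergence, and the limit is forced into $L_1\cap L_2=\{0\}$, contradicting the unit norm. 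Your route avoids the auxiliary lemma and any appeal to spectral theory, making the role of compactness (weak-to-strong convergence) completely explicit; the paper's route is terser but leans on the eigenvalue structure of compact operators and, strictly speaking, needs a word on why the operator norm of the non-self-adjoint product $\mathcal P_2\mathcal P_1$ is controlled by its eigenvalues.
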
 
	
	The  proofs of Propositions  \ref{lprop:Hadd} and \ref{lprop:Hadd closed help1} can be reconstructed from \cite[A.4 Proposition 2]{bickel1993efficient}, \cite[Chapter 4, Theorem 4.2]{kato2013perturbation} and \cite{kober1940theorem}. For completeness,  we have added proofs of the propositions in Appendix \ref{app:B}.\newline
	
	\noindent We now come to the proof of Lemma \ref{lem:Hadd closed}.
	\begin{proof} [of Lemma \ref{lem:Hadd closed}] First note that the spaces $\mathcal H_k$ are closed for $k=0,\dots, 2d$.\newline

		\noindent We show that $\mathcal H_k + \mathcal H_{k'}$ is closed for $1 \leq k \leq d$.  Consider
		$R= \min M$ where 
		\[
		M:=\{ r\geq 0\ |\ (\hat p_k^*(x_k))^2 \leq r \hat p_k(x_k) \hat p_k^{**}(x_k) \text{ for all } x_k\in\overline{\mathcal{X}}_k \text{ and } 1 \leq k \leq d\} 
		\]
		and 
		\[
		I_{x_k}:=\bigg\{i\in\{1,\dots,n\}\ \bigg|\ \int_{u \in \mathcal X_{-k}(x_k) }K^{X_i}_h(X_i-u)\mathrm du_{-k} > 0\bigg\}
		\]
		for $x_k\in\overline{\mathcal X}_k$. By the Cauchy-Schwarz inequality we have $(\hat p_k^*(x_k))^2 \leq \hat p_k(x_k) \hat p_k^{**}(x_k)$ for $x_k\in\overline{\mathcal{X}}_k$ and $1\leq k\leq d$. This implies $R \leq 1$ . Now, equality in the inequality only holds if $X_{ik} - x_k$ does not depend on $i\in I_{x_k}$. On  the event $\mathcal E$ for $x_k\in \overline{\mathcal{X}}_k$ there exist $1\leq i_1,i_2\leq n$ with $|x_k-X_{i,k}|<h$ for $i=i_1,i_2$ and $X_{i_1,k}\neq X_{i_2,k}$. Thus, $X_{ik}-x_k$ depends on $i$ for $i\in I_{x_k}$ and the strict inequality holds for all $x_k$. Furthermore, 
		because the kernel function $k$ is continuous, we have that $\hat p_k$, $\hat p_k^{*}$ and $\hat p_k^{**}$ are continuous. Thogether with the compactness of $\overline{\mathcal{X}}_k$ this implies that $R < 1$ on  the event $\mathcal E$.\newline
		Now let $f \in \mathcal H_k$ and $g \in  \mathcal H_{k'}$ for some $1\leq k\leq d$. We will show
		\begin{equation} \label{eq: 2 closed} 
		\|f+g\|_n^2 \geq (1-R) (\|f\|_n^2 + \|g\|_n^2).
		\end{equation}
		By application of Proposition \ref{lprop:Hadd} this immediately implies that $\mathcal H_k + \mathcal H_{k'}$ is closed. For a proof of \eqref{eq: 2 closed} note that
		\begin{eqnarray*}
			\|f+g\|_n^2 &=& n^{-1} \sum_{i=1} ^n (f_k(x_k) + g_0 + (X_{ik} -x_k) g_{k'}(x_k))^2 K_h^{X_i}(X_i -x) \mathrm d x\\
			&=&  \int ( f_k(x_k) +g_0)^2 \hat p_k(x_k) \mathrm d x_k  + 
			2 \int ( f_k(x_k) + g_0) g_{k'}(x_k) \hat p^{*}_k(x_k)  \mathrm d x_ k \\
			&& \qquad+  \int g_{k'}(x_k)^2 \hat p^{**}_k(x_k)  \mathrm d x_ k\\
			&\geq& \int ( f_k(x_k) +g_0)^2 \hat p_k(x_k) \mathrm d x_k  +  \int g_{k'}(x_k)^2 \hat p^{**}_k(x_k)  \mathrm d x_ k
			\\
			&& \qquad - 
			2 R \int | f_k(x_k) + g_0| |g_{k'}(x_k)| (\hat p_k(x_k)\hat p^{**}_k(x_k))^{1/2}  \mathrm d x_ k\\
			&\geq& \int ( f_k(x_k) +g_0)^2 \hat p_k(x_k) \mathrm d x_k  +  \int g_{k'}(x_k)^2 \hat p^{**}_k(x_k)  \mathrm d x_ k
			\\
			&& -
			2 R  \left (  \int ( f_k(x_k) +g_0)^2 \hat p_k(x_k) \mathrm d x_k  \right )^{1/2}  \left (  \int g_{k'}(x_k)^2 \hat p^{**}_k(x_k)  \mathrm d x_ k\right) ^{1/2} 
			\\
			&\geq& (1-R)  \int ( f_k(x_k) +g_0)^2 \hat p_k(x_k) \mathrm d x_k  + (1-R) \int g_{k'}(x_k)^2 \hat p^{**}_k(x_k)  \mathrm d x_ k
			\\
			&=& (1-R) \left ( \int f_k(x_k)^2 \hat p_k(x_k) \mathrm d x_k  +  g_0^2+ \int g_{k'}(x_k)^2 \hat p^{**}_k(x_k)  \mathrm d x_ k \right )\\
			&\geq&  (1-R) (\|f\|_n^2 + \|g\|_n^2),
		\end{eqnarray*}
		where in the second to last row, we used that $f\in\mathcal{H}_k$.
		This concludes the proof of \eqref{eq: 2 closed}.\newline
		Note that the statement of the lemma is equivalent to the following statement: For $D_1, D_2\subseteq\{1,\dots,d\}$ and $\delta\in\{0,1\}$ the space $\mathcal \delta\mathcal{H}_0+\sum_{k\in D_1} \mathcal{H}_k  +\sum_{k\in D_2}\mathcal{H}_{k'}$ is closed. We show this inductively over the number of elements $s=|D_2\cap D_1|$ of $D_1\cap D_2$.\newline 
		
		\noindent For the case $s=0$, note that
		for $D_1\cap D_2=\emptyset$, the space $\mathcal \delta\mathcal{H}_0+\sum_{k\in D_1} \mathcal{H}_k+\sum_{k\in D_2}\mathcal{H}_{k'}$ is closed, which can be shown with similar but simpler arguments than the ones used below. 
		Now let $s\geq 1$, $\delta\in\{0,1\}$, $D_1,D_2\subseteq \{1,\dots,d\}$ with $|D_2\cap D_1|=s-1$ and assume $L_2 = \delta\mathcal{H}_0+\sum_{j\in D_1}\mathcal{H}_j+\sum_{j\in D_2}\mathcal{H}_{j'}$ is closed. Without loss of generality, let $k\in\{1,\dots,d\}\backslash (D_1\cup D_2)$. We will argue that on the event $\mathcal E$ the orthogonal projection of $L_2$ on $L_1 = \mathcal H_{k}+ \mathcal H_{k'}$ is Hilbert-Schmidt, noting that a Hilbert-Schmidt operator is compact. 
		Using Proposition \ref{lprop:Hadd closed help1} since $L_1$ and $L_2$ are closed, this implies that $L = \mathcal \delta \mathcal{H}_0+\sum_{j\in D_1\cup\{k\}}\mathcal{H}_j+\sum_{j\in D_2\cup\{k\}}\mathcal{H}_{j'}$ is closed which completes the inductive argument.\newline
		For an element $f\in L_2$ with decomposition $f= f_0 + \sum_{j\in D_1}f_j+\sum_{j\in D_2}f_{j'}$ the projection onto $L_1+\mathcal{H}_0$ is given by univariate functions $g_k, g_{k'}$ and $g_0 \in \mathbb R$ which satisfy
		\begin{eqnarray*}
			0&=& \sum_{i=1} ^n \int \bigg (f_0 + \sum_{j\in D_1} f_j(x_j) + \sum_{j\in D_2} f_{j'}(x_j) (X_{ij}-x_j) {- g_0} - g_k(x_k) \\
			&& \qquad - g_{k'}(x_k) (X_{ik}-x_k) \bigg )\left(\begin{array}{c}1 \\X_{ik}-x_k\end{array}\right)
			K^{X_i}(X_i -x) \mathrm d x_{-k}.
		\end{eqnarray*}
		Note that $f_0=0$ if $\delta=0$. This implies
		\begin{eqnarray*}
			&&\left(\begin{array}{c}{g_0+}g_k(x_k)  \\g_{k'}(x_k)\end{array}\right)
			= \frac 1 {(\hat p_k \hat p_k^{**} - ( \hat p_k^{*})^2)(x_k)} \left(\begin{array}{cc}\hat p_k^{**} & -\hat p_k^{*} \\ -\hat p_k^{*} & \hat p_k\end{array}\right)(x_k)\\
			&&\qquad \times \bigg \{f_0 \left(\begin{array}{c}\hat p_k\\\hat p_k^{*}\end{array}\right) (x_k) +
			\sum_{j\in D_1}\int  f_j(x_j) \left(\begin{array}{c}\hat p_{jk} (x_j,x_k) \\ \hat p_{kj} ^{*}(x_k,x_j)\end{array}\right) \mathrm d x_{j}
			\\ && \qquad \qquad+\sum_{j\in D_2}\int  f_{j'}(x_j) \left(\begin{array}{c}\hat p^{*}_{jk} (x_j,x_k) \\ \hat p_{jk} ^{**}(x_j,x_k)\end{array}\right) \mathrm d x_{j}\bigg \},
		\end{eqnarray*}
		where $g_k$  and $g_0$ are  chosen such that $\int g_k(x_k) \hat p_k(x_k) \mathrm d x_k =0$. 
		We now use that the projection of $g_0$ onto $L_1 = \mathcal H_{k}+ \mathcal H_{k'}$ is equal to 
		\begin{eqnarray*}
			&&\left(\begin{array}{c}r_k(x_k)  \\r_{k'}(x_k)\end{array}\right)
			= \left(\begin{array}{c} g_0 (1 -c_k s_k(x_k) \hat p_k^{**}(x_k)) \\g_0 c_k s_k(x_k) $ $\hat p_k^{*} (x_k)\end{array}\right),
		\end{eqnarray*}
		where $s_k(x_k) =  \hat p_k(x_k)/ ( \hat p_k(x_k) \hat p_k^{**}(x_k)-( \hat p_k^{*})^2(x_k))$ and $c_k = (\int s_k(x_k) \hat p^{**}_k(x_k)$ $ \hat p_k(x_k) \mathrm d x_k) ^{-1} $.
		Thus,  the projection of $f$ onto $L_1$ is defined by $(g_{k}(x_k) + r_{k}(x_k),$ $ g_{k'}(x_k)+ r_{k'}(x_k)) ^\intercal$.
		Under our settings on the event $\mathcal E$ this is a Hilbert-Schmidt operator. This concludes the proof.
	\end{proof}

	We now come to a short discussion of the convergence of Algorithm 1. The algorithm is used to approximate $\hat m$. In the lemma we denote by 
	$\hat m^{[r]}$ the outcome of the algorithm after $r$ iterations of the while loop (see Algorithm 1).\newline

	\noindent We prove the algorithm for arbitrary starting values, i.e. we can set the $\hat{m}_k(x_k)$ to values other than zero in step 1 of Algorithm 1. The vector of starting values of the algorithm is denoted by $\hat m^{[0]}\in\mathcal{H}_{add}$.
	
	\begin{lemma} \label{lem:conv alg}
		Make Assumption (A1). Then, on the event $\mathcal E$, for Algorithm 1 and all choices of starting values $\hat m^{[0]}\in\mathcal{H}_{add}$ we have
		$$\| \hat m^{[r]} - \hat m \|_n \leq V ^r \| \hat m^{[0]} - \hat m \|_n,$$
		where $0 \leq V=1-\prod_{k=0}^{2d-1}\gamma^2(\mathcal{H}_k,\mathcal{H}_{k+1}+\dots+\mathcal{H}_{2d})< 1$  is a random variable depending on the observations. 	\end{lemma}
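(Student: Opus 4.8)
The plan is to read Algorithm~1 as a cyclic (Gauss--Seidel) projection scheme and to follow the error $\hat m^{[r]}-\hat m$ through one full sweep of the for-loop. Write $Q_k:=I-\mathcal P_k$ for the orthogonal projection of $\mathcal H$ onto $\mathcal H_k^{\perp}$. First I would show that one pass of the loop over $k=0,\dots,2d$ multiplies the current error by the operator $T:=Q_{2d}\cdots Q_0$. Since $\hat m$ is the projection of $Y$ onto $\mathcal H_{add}$, the residual $Y-\hat m$ is orthogonal to each $\mathcal H_k$, so $\mathcal P_k(Y)=\mathcal P_k(\hat m)$ and $\hat m$ is a fixed point of every update, $\hat m_k=\widetilde m_k-\sum_{j\neq k}\mathcal P_k(\hat m_j)$. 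Subtracting this from line~6 and abbreviating the current total error by $g=\hat m^{\mathrm{curr}}-\hat m=\sum_j g_j$ with $g_j\in\mathcal H_j$, the update replaces the $k$-th component error by $g_k-\mathcal P_k g$ (using $\mathcal P_k g_k=g_k$) and hence replaces the total error $g$ by $g-\mathcal P_k g=Q_k g$. Crucially, this identity is independent of which components have already been refreshed during the sweep, so after a complete sweep the error becomes $Tg$, and $T$ maps the closed space $\mathcal H_{add}$ into itself.

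It follows that $\hat m^{[r]}-\hat m=T^{\,r}(\hat m^{[0]}-\hat m)$, so the lemma reduces to the one-sweep contraction
$$\norm{T g}_n^{2}\le V\,\norm{g}_n^{2}\qquad\text{for all }g\in\mathcal H_{add}.$$
To prove it I would telescope along the sweep. Setting $g^{(0)}=g$ and $g^{(k+1)}=Q_k g^{(k)}$, so that $Tg=g^{(2d+1)}$, orthogonality of $\mathcal P_k g^{(k)}\in\mathcal H_k$ to $g^{(k+1)}=g^{(k)}-\mathcal P_k g^{(k)}$ gives the Pythagorean identity $\norm{g^{(k)}}_n^{2}=\norm{\mathcal P_k g^{(k)}}_n^{2}+\norm{g^{(k+1)}}_n^{2}$, whence
$$\norm{g}_n^{2}-\norm{Tg}_n^{2}=\sum_{k=0}^{2d}\norm{\mathcal P_k g^{(k)}}_n^{2}.$$
The contraction is therefore equivalent to the lower bound $\sum_{k=0}^{2d}\norm{\mathcal P_k g^{(k)}}_n^{2}\ge\big(\prod_{k=0}^{2d-1}\gamma^{2}(\mathcal H_k,\mathcal H_{k+1}+\dots+\mathcal H_{2d})\big)\norm{g}_n^{2}$ on the sum of squared increments; note that for two subspaces this already holds with equality, which fixes the correct constant.

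The remaining step, which I expect to be the main obstacle, is exactly this lower bound. The positivity of each gap $\gamma_k:=\gamma(\mathcal H_k,\mathcal H_{k+1}+\dots+\mathcal H_{2d})$ is guaranteed on $\mathcal E$ by part~(iv) of Proposition~\ref{lprop:Hadd}, since the tail spaces are closed by Lemma~\ref{lem:Hadd closed} and the decomposition $\mathcal H_{add}=\sum_j\mathcal H_j$ is direct by Lemma~\ref{lem:uni Hadd}; together with the trivial bound $\gamma_k\le1$ this already yields $0\le V<1$. I would obtain the quantitative estimate by induction on the number of projections, peeling off $Q_0$ first: the gap $\gamma_0$ forces the first increment $\mathcal P_0 g$ to retain a fixed proportion $\gamma_0^{2}$ of the component energy that the remaining sweep would otherwise leave untouched, while the inductive hypothesis controls $Q_{2d}\cdots Q_1$ acting on $g^{(1)}$, and the two contributions multiply to the product of gaps. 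The delicate point is that $g^{(1)}=Q_0 g$ leaves the tail space $\mathcal H_1+\dots+\mathcal H_{2d}$, so the inductive statement must be formulated for the partial products acting on all of $\mathcal H_{add}$ rather than on the tail alone; this is precisely where the gap (equivalently, angle/concurvity) estimates of Proposition~\ref{lprop:Hadd} do the real work. Once the bound is in place, $\norm{Tg}_n^{2}\le V\norm{g}_n^{2}$ holds, and iterating over the $r$ sweeps gives $\norm{\hat m^{[r]}-\hat m}_n^{2}\le V^{\,r}\norm{\hat m^{[0]}-\hat m}_n^{2}$, the asserted geometric convergence for arbitrary starting values $\hat m^{[0]}\in\mathcal H_{add}$.
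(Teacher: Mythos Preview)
Your proposal is correct and follows essentially the same route as the paper: write one full sweep as the product $T=Q_{2d}\cdots Q_0$ of the complementary projections, reduce to the one-sweep contraction $\|Tg\|_n^2\le V\|g\|_n^2$ on $\mathcal H_{add}$, and prove the latter by induction using the gaps $\gamma(\mathcal H_k,\mathcal H_{k+1}+\dots+\mathcal H_{2d})$ together with Lemma~\ref{lem:Hadd closed} and Proposition~\ref{lprop:Hadd}. The only cosmetic difference is bookkeeping: you track the error $\hat m^{[r]}-\hat m$ directly and insert the Pythagorean telescoping $\|g\|_n^2-\|Tg\|_n^2=\sum_k\|\mathcal P_k g^{(k)}\|_n^2$, whereas the paper tracks $Y-\hat m^{[r]}$ and $Y-\hat m$ separately and runs the induction straight on $\|Q_{2d}\cdots Q_j g\|_n^2$ via the orthogonal decomposition of $Q_j g$ along $\mathcal N_{j+1}=\mathcal H_{j+1}+\dots+\mathcal H_{2d}$ and $\mathcal N_{j+1}^\perp$; the two arguments are equivalent.
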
 
	\begin{remark}
	On the event $\mathcal E$, the algorithm converges with a geometric rate where in every iteration step the distance to the limiting value, $\hat m$, is reduced by a factor  smaller or equal to $V$. If the columns of the design matrix $X$ are orthogonal, $V$ will be close to zero and if they are highly correlated, $V$ will be close to 1.
	The variable $V$ depends on $n$ and is random. Under additional assumptions, as stated in the next section, one can show that with probability tending to one, $V$ is bounded by a constant smaller than 1.
	\end{remark}
	\begin{proof} [of Lemma \ref{lem:conv alg}]
		For a subspace $\mathcal{V}\subseteq\mathcal{H}_{add}$ we denote by $\mathcal{P}_\mathcal{V}$ the orthogonal projection onto $\mathcal{V}$.
		For $k=0,\dots,2d$ let $\mathcal{Q}_k:=\mathcal{P}_{\mathcal{H}_k^\perp}=1-\mathcal{P}_k$ be the projection onto the orthogonal complement $\mathcal{H}_k^\perp$ of $\mathcal{H}_k$. The idea is to show the following statements.
		\begin{enumerate}
			\item[(i)] $Y-\hat{m}^{[r]}=(\mathcal{Q}_{2d}\dots \mathcal{Q}_0)^r(Y-\hat{m}^{[0]})$,
			\item[(ii)] $(\mathcal{Q}_{2d}\dots\mathcal{Q}_0 )^r(Y-\hat{m})=Y-\hat{m}$,
		\end{enumerate}
		This then implies 
		\begin{align*}
		\norm{\hat{m}^{[r]}-\hat{m}}_n &=\norm{\hat{m}^{[r]}-Y+Y-\hat{m}}_n= \norm{(\mathcal{Q}_{2d}\dots\mathcal{Q}_0)^r(\hat{m}^{[0]}-\hat{m})}_n.
		\end{align*}
		The proof is concluded by showing
		\begin{equation}\label{eq: Sinus}
		\norm{\mathcal{Q}_{2d}\dots\mathcal{Q}_0g}^2_n\leq \bigg(1-\prod_{k=0}^{2d-1}\gamma^2(\mathcal{H}_{k},\mathcal{H}_{k+1}+\dots+\mathcal{H}_{2d})\bigg)\norm{g}^2_n
		\end{equation}
		for all $g\in \mathcal{H}_{add}$. Note that $0\leq V:=1-\prod_{k=0}^{2d-1}\gamma^2(\mathcal{H}_k,\mathcal{H}_{k+1}+\dots+\mathcal{H}_{2d})<1$ by Lemma \ref{lem:Hadd closed} and Proposition \ref{lprop:Hadd}.\newline
		
		\noindent For (i), observe that for all $r\geq 1$ and $k=0,\dots,2d$ we have
		\begin{align*}
		& Y-\hat{m}^{[r-1]}_0-\dots-\hat{m}_{k-1}^{[r-1]}-\hat{m}_{k}^{[r]}-\dots-\hat{m}^{[r]}_{2d}\\
		& =(1-\mathcal{P}_k)(Y-\hat{m}^{[r-1]}_0-\dots-\hat{m}_{k-1}^{[r-1]}-\hat{m}_{k+1}^{[r]}-\dots-\hat{m}^{[r]}_{2d})\\
		& =(1-\mathcal{P}_k)(Y-\hat{m}^{[r-1]}_0-\dots-\hat{m}_{k-1}^{[r-1]}-\hat{m}_{k}^{[r-1]}-\hat{m}_{k+1}^{[r]}-\dots-\hat{m}^{[r]}_{2d})\\
		&=\mathcal{Q}_k(Y-\hat{m}^{[r-1]}_0-\dots-\hat{m}_{k}^{[r-1]}-\hat{m}_{k+1}^{[r]}-\dots-\hat{m}^{[r]}_{2d}).
		\end{align*}
		The statement follows inductively by beginning with the case $r=1, k=0$. Secondly, (ii) follows from
		\begin{align*}
		\mathcal{Q}_r\dots\mathcal{Q}_0(Y-\hat{m})& =\mathcal{Q}_r\dots\mathcal{Q}_0\mathcal{P}_{\mathcal{H}_0^\perp\cap\dots\cap \mathcal{H}_{2d}^\perp}(Y) = \mathcal{P}_{\mathcal{H}_0^\perp\cap\dots\cap \mathcal{H}_{2d}^\perp}(Y)=Y-\hat{m}.
		\end{align*}
		It remains to show the inequality in \eqref{eq: Sinus}.\newline
		
		\noindent For $0\leq k\leq 2d$ define $\mathcal{N}_k:=\mathcal{H}_{k}+\dots+ \mathcal{H}_{2d}$. We prove $\norm{\mathcal{Q}_{2d}\dots\mathcal{Q}_jg}_n^2\leq (1-\prod_{k=j}^{2d-1}\gamma^2(\mathcal{H}_{k},\mathcal{H}_{k+1}+\dots+\mathcal{H}_{2d}))\norm{g}_n^2$ for all $g\in\mathcal{H}_{add}$ and $0\leq j\leq 2d$ using an inductive argument.\newline 
		The case $j=2d$ is trivial. For $0\leq j<2d$ and any $g\in \mathcal{H}_{add}$ let $g_{j}^\perp:=\mathcal{Q}_jg=g'+g''$ with $g':=\mathcal{P}_{\mathcal{N}_{j+1}^\perp}(g)$ and $g'':=\mathcal{P}_{\mathcal{N}_{j+1}}(g)$. Then, by orthogonality, we have
		\[
		\norm{\mathcal{Q}_{2d}\dots\mathcal{Q}_{j+1}g_j^\perp}_n^2=\norm{g'+\mathcal{Q}_{2d}\dots\mathcal{Q}_{j+1}g''}^2_n=\norm{g'}^2_n+\norm{\mathcal{Q}_{2d}\dots\mathcal{Q}_{j+1}g''}^2_n.
		\]
		Induction gives
		\[
		\norm{\mathcal{Q}_{2d}\dots\mathcal{Q}_{j+1}g''}^2_n\leq \bigg(1-\prod_{k=j+1}^{2d-1}\gamma^2(\mathcal{H}_k,\mathcal{H}_{k+1}+\dots+\mathcal{H}_{2d})\bigg)(\norm{g^\perp_j}_n^2-\norm{g'}_n^2)
		\]
		which implies
		\begin{align*}
		\norm{\mathcal{Q}_{2d}\dots\mathcal{Q}_{j+1}g_j^\perp}_n^2\leq\bigg(& 1-\prod_{k=j+1}^{2d-1}\gamma^2(\mathcal{H}_k,\mathcal{H}_{k+1}+\dots+\mathcal{H}_{2d})\bigg)\norm{g^\perp_j}_n^2\\
		& +\prod_{k=j+1}^{2d-1}\gamma^2(\mathcal{H}_k,\mathcal{H}_{k+1}+\dots+\mathcal{H}_{2d})\norm{g'}_n^2.
		\end{align*}
		By Lemma \ref{lem:Hadd closed} and Lemma \ref{lemma gamma} we have 
		\[
		\norm{g'}_n^2\leq \norm{\mathcal{P}_{\mathcal{N}_{j+1}^\perp}\mathcal{Q}_1}_n^2=\norm{\mathcal{P}_{N_{j+1}}\mathcal{P}_{\mathcal{H}_j}}_n^2=1-\gamma^2(\mathcal{H}_j,\mathcal{H}_{j+1}+\dots+\mathcal{H}_{2d}).
		\]
		This concludes the proof by noting that $\norm{g_j^\perp}_n\leq\norm{g}_n$.
	\end{proof}
	
	\section{Asymptotic properties of the estimator} \label{sec:asymp}
	
	In this section we will discuss asymptotic properties of the local linear smooth backfitting estimator. 
	For simplicity we consider only the case that $\mathcal X$ is a product of intervals $\mathcal X_j= (a_j, b_j) \subset \mathbb R$.

	We make the following additional assumptions:
	\begin{enumerate}
	
	\item[(A2)] The observations  $(Y_i, X_i)$ are i.i.d. and the covariates $X_i$ have one-dimansional marginal densities $p_j$ which are strictly positive on $[a_k, b_k]$. The two-dimensional marginal densities $p_{jk}$ of $(X_{i,j}, X_{i,k})$ are continuous on their support $[a_j, b_j] \times [a_k, b_k] $.
		
		\item[(A3)] It holds 
		\begin{equation} \label{eq:model}Y_i= m_0 + m_1(X_{i1}) + \dots + m_d(X_{id})+ \varepsilon_i, \end{equation} 
		for twice continuously differentiable functions $m_j : \mathcal X_j \to \mathbb R$ with 
		$\int m_j(x_j)$ $ p_j(x_j) \mathrm d x_j =0$. The error variables $\varepsilon_i$ satisfy
		$\mathbb E[\varepsilon_i | X_i] =0$ and  $$\sup_{x \in \mathcal X } \mathbb E[|\varepsilon_i| ^{5/2} | X_i=x] < \infty.$$
		\item[(A4)]
			There exist constants $c_1,\dots,c_d> 0$ with $n^{1/5} h_j \to c_j$ for $n \to \infty$. To simplify notation we assume that $h_1=\dots=h_d$. In abuse of notation we write $h$ for $h_j$ and $c_h $ for $c_j$.
	\end{enumerate}
	From now on we will write $\hat m^n =(\hat m^n_0, \hat m^n_1, \dots, \hat m^n_{2d})$ for the estimator $\hat m$ to indicate its dependence on the sample size $n$. The following theorem states an asymptotic expansion for the components  $ \hat m^n_1, \dots, \hat m^n_{d}$.
	Later in this section we will state some lemmas which will be used to prove the result. 
	
	\begin{theorem} \label{theo: asymp}
		Make assumptions (A1) -- (A4). Then
			\begin{align*} &\bigg| \hat m^n_j(x_j) - m_j(x_j)- \left  (\beta_j(x_j) -\int \beta_j(u_j) p_j(u_j) \mathrm d u_j\right )  
			- v_j(x_j)\bigg| \\ &= o_P(h^2+\{nh\}^{-1/2}) = o_P(n^{-2/5}),
			\end{align*} 
			holds uniformly over ${1 \leq j \leq d}$  and ${a_j  \leq x_j \leq b_j }$, 
		where $v_j$ is a stochastic variance term
		\[
		v_j(x_j)= \frac { \frac 1 n \sum_{i=1} ^n  h^{-1} k(h^{-1}(X_{ij} -x_j))\varepsilon_i} { \frac 1 n \sum_{i=1} ^n  h^{-1} k(h^{-1}(X_{ij} -x_j))}=O_p(\{nh\}^{-1/2})
		\]	
		and $\beta_j$ is a deterministic bias term
		\[
		\beta_j(x_j)=\frac 1 2 h^2 m_j^{''}(x_j)\ \frac {b_{j,2}(x_j) ^2- b_{j,1}(x_j) b_{j,3}(x_j)}{b_{j,0}(x_j) b_{j,2}(x_j)- b_{j,1}(x_j) ^2}=O(h^2),
		\]
		with
		$b_{j,l}(x_j)= \int_{\mathcal X_j} k(h^{-1} (u_j-x_j))(u_j-x_j)^l h^{-l-1}b_j(u_j)^{-1}\mathrm du_j$ and $ b_{j}(x_j)= \int_{\mathcal X_j} k(h^{-1} (x_j-w_j)) h^{-1}\mathrm dw_j$ for $0 \leq l \leq 2$. 
	\end{theorem}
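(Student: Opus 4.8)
The plan is to base everything on the projection identity \eqref{minimiser}, which says that $\widehat m^n$ is the orthogonal projection $\mathcal P_{add}(Y)$ of $Y$ onto $\mathcal H_{add}$, together with linearity of that projection. First I would embed the true regression function $m$ into $\mathcal H_{add}$ via $m_0$, $m_j$ and $m_j^{(1)}=m_j'$. The only obstruction is that the population norming $\int m_j p_j=0$ from (A3) differs from the empirical constraint \eqref{eq:constr}; this discrepancy is an average of mean-zero quantities, hence $O_P(n^{-1/2})=o_P(n^{-2/5})$ uniformly, and can be absorbed into the centering. Modulo this correction $m\in\mathcal H_{add}$, so $\widehat m^n-m=\mathcal P_{add}(Y-m)$. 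The key computation is that inside the semi-inner product $\langle\cdot,\cdot\rangle_n$ the element $Y-m$ acts on observation $i$ at the point $x$ as
\[
\varepsilon_i+\sum_{j=1}^d\bigl\{m_j(X_{ij})-m_j(x_j)-m_j'(x_j)(X_{ij}-x_j)\bigr\}=\varepsilon_i+\tfrac12\sum_{j=1}^d m_j''(x_j)(X_{ij}-x_j)^2+R_i(x),
\]
with a third-order Taylor remainder $R_i$ of uniformly smaller order. This realises $Y-m$ as the sum of a stochastic element $E$ acting as $\varepsilon_i$ and a deterministic element $B$ acting as the quadratic curvature term; the former will produce $v_j$ and the latter $\beta_j$.

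Next I would read off the componentwise backfitting equations. Since each $\mathcal H_k\subseteq\mathcal H_{add}$, applying $\mathcal P_k$ to the identity above and using \eqref{eq:back} gives, for $1\le k\le d$,
\[
\widehat m^n_k-m_k=\mathcal P_k(E)+\mathcal P_k(B)-\sum_{j\neq k}\mathcal P_k\bigl(\widehat m^n_j-m_j\bigr).
\]
The diagonal sources are evaluated with $\mathcal P_k=P_k-\mathcal P_0$ and the marginals $\hat p_k,\hat p_k^{*},\hat p_k^{**}$: feeding the stochastic element $E$ into the one-dimensional local linear smoother in direction $k$ yields the oracle stochastic term, whose leading part is $v_k$ (the slope correction being of smaller order), while feeding the quadratic part of $B$ reproduces the boundary-corrected local linear bias, which is exactly $\beta_k(x_k)$ once $\hat p_k,\hat p_k^{*},\hat p_k^{**}$ are replaced by their population limits under (A2) and identified with the ratios of the $b_{k,l}$. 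The centering $-\int\beta_k p_k$ arises from the $\mathcal H_0$-subtraction and the norming constraint.

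It remains to treat the coupling $\sum_{j\neq k}\mathcal P_k(\widehat m^n_j-m_j)$. I would read the $2d+1$ equations as an empirical integral equation of the second kind and solve it by the Neumann series whose convergence on $\mathcal E$ is guaranteed by Lemma \ref{lem:conv alg}; under (A2)--(A4) the contraction factor is bounded below one with probability tending to one, and the operator is uniformly invertible by Lemma \ref{lem:Hadd closed} together with Proposition \ref{lprop:Hadd}. Two different mechanisms then make the coupling harmless. For the stochastic part, smoothing the localised term $v_j$ of order $(nh)^{-1/2}$ against the continuous bivariate density $p_{jk}$ in a different direction delocalises it, so $\mathcal P_k(v_j)=O_P(n^{-1/2})=o_P((nh)^{-1/2})$. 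For the deterministic part the cross contributions are individually of order $h^2$ and do not vanish, but one verifies that the oracle bias $\beta_k-\int\beta_k p_k$ is the self-consistent solution: the cross part of the source $\mathcal P_k(B)$ equals, at order $h^2$, the cross-projection $\sum_{j\neq k}\mathcal P_k(\beta_j-\int\beta_j p_j)$, so these terms cancel and only the oracle bias survives. This cancellation, which hinges on the bias element containing only the pure curvature $m_j''(x_j)(X_{ij}-x_j)^2$, is exactly the feature distinguishing local linear from local constant smooth backfitting.

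The step I expect to be most delicate is the uniform control of the bias coupling: establishing the order-$h^2$ cancellation above with a remainder that is genuinely $o_P(n^{-2/5})$ uniformly over $x_k\in[a_k,b_k]$, including the boundary, where the corrected kernel $K_h^{u}$ must be used and $\hat p_k^{*}$ no longer vanishes at leading order. This requires uniform convergence of the one- and two-dimensional empirical marginals $\hat p_k,\hat p_k^{*},\hat p_k^{**},\hat p_{jk},\hat p_{jk}^{*},\hat p_{jk}^{**}$ to their continuous population counterparts, supplied by the auxiliary lemmas stated below, and a maximal inequality for the stochastic smoother exploiting the bound $\sup_x\mathbb E[|\varepsilon|^{5/2}\mid X=x]<\infty$ in (A3). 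The projection viewpoint is what keeps this tractable: the gap and closedness estimates of Lemma \ref{lem:Hadd closed} and Proposition \ref{lprop:Hadd} bound the inverse of the backfitting operator uniformly, so the Neumann tail is negligible and the whole error collapses to the oracle bias and variance at the rate $n^{-2/5}$.
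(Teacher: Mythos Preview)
Your outline matches the paper's at the structural level: write the componentwise backfitting equations as an integral equation of the second kind, subtract the oracle bias $\beta$ and the stochastic term $\bar m^n(\varepsilon)$ so that the source becomes small (your ``self--consistency'' of $\beta$ is exactly Lemma~\ref{lemma:bias}, and your delocalisation of $v_j$ is Lemma~\ref{lemma:var}), and then invert the operator. Where your proposal has a genuine gap is the inversion step and the passage to the sup norm.

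First, Lemma~\ref{lem:conv alg} does not give you what you need. It controls the iterated projections $\mathcal Q_{2d}\cdots\mathcal Q_0$ of the \emph{algorithm} in the semi-norm $\|\cdot\|_n$, not a Neumann expansion of $(I+\mathcal S_n)^{-1}$; and the contraction factor $V$ there is random and $n$-dependent, with no uniform-in-$n$ bound strictly below $1$ coming out of Lemma~\ref{lem:Hadd closed} and Proposition~\ref{lprop:Hadd} alone. The paper avoids this by comparing $\mathcal S_n$ (after a rescaling $H_n$ that puts the derivative components on the right scale, a point you do not address) to a \emph{fixed deterministic} limiting operator $\mathcal S_0$ built from the true marginals $p_k$, $p_{jk}$, and proving via the bounded inverse theorem that $I+\mathcal S_0$ has a bounded inverse on a fixed $L_2$ space (Lemmas~\ref{lemma:rate of norm} and~\ref{lemma:bd inverse}). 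The perturbation series is then a series in the small quantity $H_n\tilde{\mathcal S}_nH_n^{-1}-\mathcal S_0$, not in $\mathcal S_n$ itself.

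Second, everything you invoke lives in $\|\cdot\|_n$ or $L_2$, whereas the theorem asserts a uniform bound over $x_j\in[a_j,b_j]$. The paper's device for this upgrade is the modification $\tilde{\mathcal S}_n=A_n(I+\mathcal S_n)-I$, chosen so that $\tilde{\mathcal S}_n$ is an honest integral operator with square-integrable kernels (equations~\eqref{eq:AQ1}--\eqref{eq:AQ2}); one then writes $(I+\tilde{\mathcal S}_n)^{-1}=I-\tilde{\mathcal S}_n(I+\tilde{\mathcal S}_n)^{-1}$ and bounds the sup norm of $\tilde{\mathcal S}_n g$ by Cauchy--Schwarz in terms of $\|g\|$. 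Without this (or an equivalent smoothing argument) your final paragraph cannot deliver the uniform rate, particularly in the boundary region where $A_n$ differs from the identity at order $h^2$.
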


	The expansion for $\hat m^n_j$ stated in the theorem neither depends on $d$ nor on functions $m_k$ ($k \not = j$). In particular, this shows that the same expansion holds for the local linear estimator $\widetilde m^n_j$ in the oracle model where the functions $m_k$ ($k \not = j$) are known. More precisely, in the oracle model one observes i.i.d. observations $(Y_i^\ast,X_{ij})$ with
	\begin{align}\label{oracle}
	Y^\ast_i  =  m_j(X_{ij}) + \varepsilon_i , \quad Y^\ast_i = Y_i-\sum_{k \neq j} m_k({X_{ik}}),
	\end{align}
	and the  local linear estimator  $\widetilde m^n_j$ is defined as the second component that minimises 
	the criterion
	\begin{eqnarray*} 
		\widetilde S(f_0, f_j, f^{(1)}_j)&=& \sum_{i=1}^n \int_{\mathcal X} \left \{ Y_i^\ast - f_0 -f_j(x_j) - f _j^{(1)}(x_j)  (X_{ij}-x_j)\right \} ^2 \\ && \qquad \times  \kappa_{h}^{X_{ij}}(X_{ij}-x) \mathrm dx_j
	\end{eqnarray*}
	with boundary corrected  kernel
	\[
	k_h^{u}(u-x)=  \frac{ \kappa \left(\frac{u-x}{h}\right)}{\int_{\mathcal X_j}    \kappa \left(\frac{u-v}{h}\right) \mathrm dv  }.
	\]
	We conclude that the local linear smooth backfitting estimator $\hat m_j$	is asymptotically equivalent to the local linear estimator $\widetilde m^n_j $ in the oracle model. We formulate this asymptotic equivalence as a first corollary of Theorem \ref{theo: asymp}. In particular, it implies that the estimators have the same  first order  asymptotic properties.
	\begin{corollary} \label{corr: compare tilde}
		Make assumptions (A1) -- (A4). Then it holds uniformly over ${1 \leq k \leq d}$  and ${a_j  \leq x_j \leq b_j }$ that
		\begin{eqnarray*} &&\bigg| \hat m^n_j(x_j) -  \widetilde m^n_j(x_j) \bigg| = o_P(h^2).
		\end{eqnarray*}
	\end{corollary}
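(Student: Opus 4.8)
The plan is to reduce the corollary to Theorem \ref{theo: asymp} by showing that the oracle estimator $\widetilde m^n_j$ obeys exactly the same asymptotic expansion as the smooth backfitting estimator $\hat m^n_j$, and then subtracting the two expansions. The key observation is that the centered bias $\beta_j(x_j) - \int \beta_j(u_j) p_j(u_j)\,\mathrm d u_j$ and the stochastic term $v_j(x_j)$ appearing in Theorem \ref{theo: asymp} are, by construction, precisely the leading bias and the leading variance term of a one-dimensional local linear smoother of the responses against $X_{ij}$. Since $\widetilde m^n_j$ is itself such a univariate local linear smoother, applied to the oracle responses $Y_i^\ast = m_j(X_{ij}) + \varepsilon_i$, its expansion features the identical deterministic bias and the identical random variable $v_j(x_j)$.

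First I would establish, for the oracle estimator $\widetilde m^n_j$, the expansion
\begin{align*}
&\widetilde m^n_j(x_j) - m_j(x_j) - \Bigl(\beta_j(x_j) - \int \beta_j(u_j) p_j(u_j)\,\mathrm d u_j\Bigr) - v_j(x_j) \\
&\qquad = o_P(h^2 + \{nh\}^{-1/2}),
\end{align*}
uniformly over $a_j \leq x_j \leq b_j$. This is a classical computation for univariate local linear regression under (A1)--(A4): writing $\widetilde m^n_j$ through the local linear normal equations and inserting $Y_i^\ast = m_j(X_{ij}) + \varepsilon_i$ splits the estimator into a deterministic (bias) part and a part linear in the errors $\varepsilon_i$ (variance). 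A second order Taylor expansion of $m_j$ combined with the moment functions $b_{j,l}$ produces the boundary-corrected bias $\beta_j$, while the centering $-\int \beta_j p_j$ is forced by the norming convention $\int m_j p_j = 0$ that fixes the constant in $\widetilde S$. The linear part reduces to the Nadaraya--Watson form $v_j(x_j)$ up to a remainder of order $o_P(\{nh\}^{-1/2})$, since the local linear correction to the smoothing weights contributes only at lower stochastic order.

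The corollary then follows by subtracting this expansion from the one in Theorem \ref{theo: asymp}: the terms $m_j(x_j)$, the centered bias, and the variance term $v_j(x_j)$ are identical in both expansions and cancel, leaving $\hat m^n_j(x_j) - \widetilde m^n_j(x_j) = o_P(h^2 + \{nh\}^{-1/2})$ uniformly, which equals $o_P(h^2)$ under (A4). The delicate point, and the main obstacle, is that $v_j(x_j) = O_P(\{nh\}^{-1/2}) = O_P(n^{-2/5})$ is of exactly the order of the asserted bound, so it is not enough that the two variance contributions merely be asymptotically of the same size — they must be literally the same random variable. This is guaranteed because the $v_j(x_j)$ in Theorem \ref{theo: asymp} is by construction the leading stochastic term of the univariate smoother, hence coincides with the one arising for $\widetilde m^n_j$ and cancels exactly upon subtraction rather than only in distribution. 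The technical heart, shared with the proof of Theorem \ref{theo: asymp}, is therefore the uniform control of the two remainders, in particular verifying that the boundary corrections to the local linear weights perturb the variance term only at order $o_P(n^{-2/5})$ uniformly in $x_j \in [a_j, b_j]$.
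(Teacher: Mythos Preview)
Your argument is correct and follows essentially the same route as the paper: obtain the identical expansion for $\widetilde m^n_j$ and subtract. The only difference is one of economy. The paper observes that the oracle model \eqref{oracle} is precisely the additive model \eqref{model:add} with $d=1$, so Theorem~\ref{theo: asymp} applies to $\widetilde m^n_j$ verbatim and delivers the required expansion without any separate computation; since the bias $\beta_j$ and the stochastic term $v_j$ in that expansion depend only on $m_j$, $p_j$ and the $j$-th marginal kernel, they are literally the same in both applications of the theorem and cancel upon subtraction. Your proposal instead re-derives the univariate expansion from scratch via the local linear normal equations, which is perfectly valid but duplicates work already contained in Theorem~\ref{theo: asymp}. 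Your emphasis that $v_j(x_j)$ must coincide as a random variable (not merely in law) is exactly the right point, and it is automatic under either route because the same formula for $v_j$ arises.
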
 
	For $x_j \in (a_j + 2 h, b_j - 2h)$ the bias term $\beta_j$ simplifies and we have that
	\[
	\beta_j(x_j) =h^2 \frac 1 2 m_j^{''}(x_j)  \int k(v)v^2\mathrm dv.
	\]
	This implies the following corollary of Theorem \ref{theo: asymp}.
	\begin{corollary} \label{corr: asymp}
		Make assumptions (A1) -- (A4). Then it holds uniformly over ${1 \leq k \leq d}$  and ${a_j + 2 h \leq x_j \leq b_j - 2h}$ that
		\begin{eqnarray*} &&\bigg| \hat m^n_j(x_j) -  m_j(x_j) - \frac 1 2 \left (m_j^{''}(x_j) - \int m_j^{''}(u_j) p_j(u_j) \mathrm d u_j\right ) h^2 \ \int k(v)v^2\mathrm dv  \\
			&& \qquad - v_j(x_j)\bigg| = o_P(h^2).
		\end{eqnarray*}
	\end{corollary}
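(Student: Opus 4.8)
The plan is to convert the projection characterization \eqref{minimiser} directly into the expansion. Let $\mathcal P_{add}$ denote the orthogonal projection onto $\mathcal H_{add}$, so that $\hat m=\mathcal P_{add}(Y)$. First I would embed the model $Y=m+\varepsilon$ into $\mathcal H$ and observe that the regression function $m$ already lies in $\mathcal H_{add}$ up to the discrepancy between its population norming $\int m_j p_j=0$ and the empirical constraint \eqref{eq:constr}; this discrepancy is a constant of small order that feeds only into the centering of the components. Writing the effective value $f^{i,0}(x)+\sum_j f^{i,j}(x)(X_{ij}-x_j)$ of the embedded $m$ and Taylor expanding each $m_k(X_{ik})$ about $x_k$, the embedded $m$ reproduces $m(X_i)$ up to a second-order remainder $\rho$ whose effective value is $\sum_k \tfrac12 m_k''(x_k)(X_{ik}-x_k)^2$ to leading order. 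Since $\mathcal P_{add}$ is linear and fixes $\mathcal H_{add}$, this yields the clean split
\[
\hat m - m \;=\; \mathcal P_{add}(\varepsilon)\;+\;\mathcal P_{add}(\rho),
\]
in which $\mathcal P_{add}(\varepsilon)$ will produce the stochastic term $v_j$ and $\mathcal P_{add}(\rho)$ the bias $\beta_j-\int\beta_j p_j$.

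Next I would reduce each projection to its one-dimensional building block. Grouping the $2d+1$ normal equations \eqref{eq:back} into the $d$ coordinate blocks $\mathcal H_0+\mathcal H_j+\mathcal H_{j'}$, the $\mathcal H_j$-component of $g=\mathcal P_{add}(w)$ satisfies $g|_j=\mathsf S_j(w)-\sum_{k\neq j}\mathsf S_j(g|_k)$, where $\mathsf S_j$ is the orthogonal projection onto $\mathcal H_0+\mathcal H_j+\mathcal H_{j'}$, i.e. the one-dimensional boundary-corrected local linear smoother in coordinate $j$ (the within-coordinate coupling between level $\mathcal H_j$ and slope $\mathcal H_{j'}$ through $\hat p_j^\ast$ is exactly what makes $\mathsf S_j$ local linear rather than local constant). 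Applying this to $w=\varepsilon$ and $w=\rho$, the leading contribution to $\hat m_j-m_j$ is the level part of $\mathsf S_j(\varepsilon)+\mathsf S_j(\rho)$, and the task reduces to showing that the cross-coordinate sums $\sum_{k\neq j}\mathsf S_j(g|_k)$ are uniformly of order $o_P(h^2+\{nh\}^{-1/2})$.

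For the leading terms I would use the explicit formulas for $\mathsf S_j$ in terms of the marginals $\hat p_j,\hat p_j^{\ast},\hat p_j^{\ast\ast}$ from Appendix \ref{sec: projection operators}. Applied to $\varepsilon$, the smoother returns a local linear estimate whose stochastic part equals $v_j(x_j)$ up to $o_P(\{nh\}^{-1/2})$; here the local linear and Nadaraya--Watson stochastic parts agree to leading order, which is why $v_j$ appears as the simple ratio in the statement, and no centering is needed since $\int v_j p_j=O_P(n^{-1/2})$. Applied to the quadratic remainder $\rho$, the smoother produces the local linear bias: the kernel moments $b_{j,l}(x_j)$ are precisely those entering $\mathsf S_j$, so the level part equals $\beta_j(x_j)$, while the subtraction $-\int\beta_j(u_j)p_j(u_j)\,\mathrm d u_j$ is forced by the empirical constraint \eqref{eq:constr}. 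For $x_j$ bounded away from the endpoints the $b_{j,l}$ collapse to ordinary kernel moments, giving the interior form $\tfrac12 m_j''\int k(v)v^2\,\mathrm dv$ used in Corollary \ref{corr: asymp}.

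The hard part will be the uniform negligibility of the cross-coordinate interaction terms. The $\norm{\cdot}_n$-contraction supplied by Lemmas \ref{lem:Hadd closed} and \ref{lem:conv alg} lets me solve the backfitting equation for $g|_j$ by a Neumann series in the operators $\mathsf S_j\mathsf S_k$ ($k\neq j$), but convergence in $\norm{\cdot}_n$ is too weak for a statement uniform in $x_j$. I would therefore establish a sup-norm version: each operator $\mathsf S_j\mathsf S_k$ smooths a univariate function against the two-dimensional marginal $\hat p_{jk}$, and under (A2) the empirical marginals $\hat p_j,\hat p_j^{\ast},\hat p_j^{\ast\ast},\hat p_{jk},\dots$ converge uniformly to their population limits at controlled rates; combined with a maximal inequality whose moment input is $\sup_x \mathbb E[|\varepsilon_i|^{5/2}\mid X_i=x]<\infty$ from (A3), this makes the interaction sums $o_P(\{nh\}^{-1/2})$ for $w=\varepsilon$ and $o_P(h^2)$ for $w=\rho$, uniformly in $j$ and $x_j$. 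Assumption (A4) finally balances the two pieces so that both $h^2$ and $\{nh\}^{-1/2}$ are of order $n^{-2/5}$, completing the expansion. I expect this last uniform control of the off-diagonal operators to be by far the most delicate step; once the projection picture is in place, the decomposition and the one-dimensional computations are essentially bookkeeping.
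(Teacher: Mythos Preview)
Your plan is essentially the paper's proof of Theorem~\ref{theo: asymp} (of which the corollary is an immediate interior specialization): decompose via the projection/backfitting identity into a stochastic part coming from $\varepsilon$ and a bias part coming from the second-order Taylor remainder, identify the leading one-dimensional pieces as $v_j$ and $\beta_j$, and then show the cross-coordinate interactions are uniformly negligible by inverting the system of smoothing operators. The reduction you describe and the identification of $\beta_j$, $v_j$ match the paper's Lemmas~\ref{lemma:bias} and~\ref{lemma:var} almost verbatim.

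The one place where the paper's execution differs from yours is the ``hard part''. You propose to control the Neumann series for the interaction directly in sup-norm, using uniform convergence of the empirical marginals and a maximal inequality. The paper instead works in two steps: it first obtains an $L_2$ bound on $(I+\mathcal S_n)^{-1}D_n$ by comparing $\mathcal S_n$ (after a boundary modification $A_n$ that forces the operator into the integral-kernel form \eqref{eq:AQ1}--\eqref{eq:AQ2}) with the population operator $\mathcal S_0$ of Lemma~\ref{lemma:bd inverse}, and only then upgrades to sup-norm by writing $(I+\tilde{\mathcal S}_n)^{-1}=I-\tilde{\mathcal S}_n(I+\tilde{\mathcal S}_n)^{-1}$ and applying Cauchy--Schwarz to the single outer smoothing $\tilde{\mathcal S}_n$. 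This $L_2$-then-sup trick avoids having to establish a sup-norm contraction for the full series, which is the part of your sketch most likely to become delicate; you may find it cleaner to adopt the same device.
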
 
	Corollary 	\ref{corr: asymp} can be used to derive the asymptotic distribution of $\hat m^n_j(x_j)$ for an $x_j \in (a_j, b_j)$. Under the additional assumption that $\sigma_j^2(u) = \mathbb E[\varepsilon_i^2| X_{ij}=u]$ is continuous in $u=x_j$ we get under (A1) -- (A4) that $n^{2/5} ( \hat m^n_j(x_j) -  m_j(x_j) )$ has an asymptotic normal distibution with mean $c_h \frac 1 2 \left (m_j^{''}(x_j) - \int m_j^{''}(u_j) p_j(u_j) \mathrm d u_j\right ) \int k(v)v^2\mathrm dv$ and variance $c_h^{-1} \sigma_j^2(x_j) p_j^{-1}(x_j)   \int k(v)^2\mathrm dv$. This is equal to the asymptotic limit distribution of the classical local linear estimator in the oracle model in accordance with Corollary \ref{corr: compare tilde}.\newline
	Now, we come to the proof of  Theorem \ref{theo: asymp}.\newline
	
	First, we define the operator $\mathcal S_{n} = (\mathcal S_{n,0} , \mathcal S_{n,1} ,\dots, \mathcal S_{n,2d} ):\mathcal{G}^n\rightarrow\mathcal{G}^n$ with
		\[
		\mathcal G^n= \{(g_0,\dots,g_{2d})| g_0\in \mathbb R, g_l, g_{l'} \in \text{L}_2(p_l)\ \text{with } P_0(g_l) =0\ \text{for } l=1,\dots,d\},
		\] 
		where $\mathcal S_{n,k}$ maps $g=(g_0,\dots,g_{2d})\in\mathcal{G}^n$ to $f_k$ with
		\begin{align*} f_0 & = \mathcal P_0\left( \sum _{1 \leq l \leq 2d} g_l\right)=\mathcal P_0\left( \sum _{d+1 \leq l \leq 2d} g_l\right) \in \mathbb R,\\
		f_k(x_k)&= \mathcal P_k\left( \sum _{0 \leq l \leq 2d, l\not = k} g_l\right) (x), \end{align*}
		for $1 \leq k \leq 2d$.
	With this notation we can rewrite 
	the backfitting equation \eqref{eq:back} as 
	\begin{equation} \label{eq:back2}  \bar m^n(Y)= \hat m^n  +\mathcal S_{n}  \hat m^n, 
	\end{equation} 
	where for $z \in \mathbb R ^n$ we define $\bar m^n_0(z) = \bar z = \frac 1 n \sum_{i=1} ^n z_i$ and for $1 \leq j \leq d,$
	\begin{eqnarray*} 
		\bar m^n_j(z) (x_j) &=& \hat p_j(x_j) ^{-1} \frac 1 n \sum_{i=1} ^n (z_i - \bar z) \int K_h^{X_i}(X_i -x) \mathrm d x_{-j}, \\\bar m^n_{j'}(z) (x_j) &=& \hat p^{**}_j(x_j) ^{-1} \frac 1 n \sum_{i=1} ^n (X_{ij} -x_j) z_i  \int K_h^{X_i}(X_i -x) \mathrm d x_{-j}.
	\end{eqnarray*} 
	The following lemma shows that $I+\mathcal S_{n}$ is invertible on the event $\mathcal E$. Here we denote the identity operator by $I$.
	
	\begin{lemma} \label{lem:invert}
		On the event $\mathcal E$ the operator $I+ \mathcal S_{n}: \mathcal G^n \to \mathcal G^n$ is invertible. 
	\end{lemma}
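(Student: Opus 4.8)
The plan is to display $I+\mathcal S_{n}$ as a compact perturbation of an invertible operator and then combine the Fredholm alternative with injectivity. Since both invertibility and compactness of a bounded operator are preserved when the norm is replaced by an equivalent one, and since on $\mathcal E$ assumption (A2) makes the $\mathcal G^n$-norm equivalent to the norm induced by $\langle\cdot,\cdot\rangle_n$, I will work throughout with the empirical inner product, for which each $\mathcal P_k$ is the orthogonal projection onto $\mathcal H_k$. Note that, by the definition of $\mathcal S_{n}$, the $k$-th entry of $(I+\mathcal S_{n})g$ equals $g_k+\mathcal P_k(\sum_{l\neq k} g_l)=\mathcal P_k(\sum_{l} g_l)$, a fact I will use in both halves of the argument.

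First I would settle injectivity. Suppose $(I+\mathcal S_{n})g=0$ and put $\tilde g:=g_0+\dots+g_{2d}$. The identity above gives $\mathcal P_k\tilde g=0$ for every $k=0,\dots,2d$, whence, using $g_k\in\mathcal H_k$ and self-adjointness of $\mathcal P_k$,
\[
\|\tilde g\|_n^2=\sum_{k=0}^{2d}\langle \tilde g,g_k\rangle_n=\sum_{k=0}^{2d}\langle \mathcal P_k\tilde g,g_k\rangle_n=0 .
\]
The argument of Lemma \ref{lem:uni Hadd} then forces the derivative components to vanish and the function components to be constant on $\mathcal E$; the normalization $\int g_l\,p_l=0$ built into $\mathcal G^n$ makes those constants zero as well, so $g=0$.

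Next I would split $\mathcal S_{n}=\mathcal S_{n}^{\mathrm{diag}}+\mathcal S_{n}^{\mathrm{off}}$, where $\mathcal S_{n}^{\mathrm{diag}}$ keeps only the couplings between the two spaces $\mathcal H_j$ and $\mathcal H_{j'}$ attached to the same coordinate $j$, and $\mathcal S_{n}^{\mathrm{off}}$ collects all remaining couplings. The operator $\mathcal S_{n}^{\mathrm{off}}$ will be compact: its cross-coordinate entries $\mathcal P_k g_l$ (with $k,l$ in different directions) are Hilbert--Schmidt integral operators whose kernels are built from the continuous two-dimensional marginals $\hat p_{jk},\hat p^{*}_{jk},\hat p^{**}_{jk}$, which are exactly the operators already shown to be Hilbert--Schmidt in the proof of Lemma \ref{lem:Hadd closed}, while the entries coupling to the constant component (the maps $g_{j'}\mapsto\mathcal P_0 g_{j'}$ and $g_0\mapsto\mathcal P_{j'}g_0$) have finite rank; a finite sum of compact operators is compact. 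On the other hand $A:=I+\mathcal S_{n}^{\mathrm{diag}}$ is block diagonal, with the identity on the constant component and, for each $j=1,\dots,d$, a block $B_j=I+N_j$ on $\mathcal H_j\times\mathcal H_{j'}$, where $N_j$ sends $(g_j,g_{j'})$ to $(\mathcal P_j g_{j'},\mathcal P_{j'}g_j)$. Inequality \eqref{eq: 2 closed} shows that the Friedrichs angle between $\mathcal H_j$ and $\mathcal H_{j'}$ is positive, i.e. $\|N_j\|_n<1$ on $\mathcal E$, so each $B_j$ is invertible by a Neumann series and hence $A$ is invertible.

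Putting the pieces together, $I+\mathcal S_{n}=A+\mathcal S_{n}^{\mathrm{off}}=A\,(I+A^{-1}\mathcal S_{n}^{\mathrm{off}})$ with $A^{-1}\mathcal S_{n}^{\mathrm{off}}$ compact; by the Fredholm alternative $I+A^{-1}\mathcal S_{n}^{\mathrm{off}}$ is invertible if and only if it is injective, and its injectivity is inherited from that of $I+\mathcal S_{n}$ because $A$ is invertible. Therefore $I+\mathcal S_{n}$ is invertible. The hard part will be the compact/invertible dichotomy of the third step: one must recognize that precisely the within-coordinate couplings $\mathcal P_j\leftrightarrow\mathcal P_{j'}$ act essentially as multiplication through $\hat p^{*}_j/\hat p_j$ and therefore fail to be compact, whereas every cross-coordinate and constant coupling is smoothing and hence compact; isolating these terms and controlling the within-coordinate blocks via the angle bound \eqref{eq: 2 closed} is the crux, with the reconciliation of the $\mathcal G^n$-norm and $\langle\cdot,\cdot\rangle_n$ a routine but necessary bookkeeping step.
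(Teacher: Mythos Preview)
Your injectivity argument is exactly the paper's proof: both observe that the $k$-th component of $(I+\mathcal S_n)g$ is $\mathcal P_k\bigl(\sum_l g_l\bigr)$, deduce $\langle g_k,\sum_l g_l\rangle_n=0$ for every $k$, sum to get $\|\sum_l g_l\|_n=0$, and invoke Lemma~\ref{lem:uni Hadd}. One small slip: the centering in $\mathcal G^n$ is $P_0(g_l)=\int g_l\,\hat p_l=0$, not $\int g_l\,p_l=0$; this is precisely the normalisation built into $\mathcal H_{add}$, so Lemma~\ref{lem:uni Hadd} already gives $g_l\equiv 0$ without your extra constant-killing step. Also, the norm equivalence you need comes from (A1) together with the event $\mathcal E$ (which forces $\hat p_k$ and $\hat p_k^{**}$ to be bounded away from zero), not from (A2), which is not assumed in the lemma.

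Where you depart from the paper is in what follows. The paper's proof simply stops after injectivity, treating it as sufficient for invertibility without further comment; you instead supply a full surjectivity argument via the Fredholm alternative, splitting $\mathcal S_n$ into a compact cross-coordinate/finite-rank part and an invertible block-diagonal part $I+\mathcal S_n^{\mathrm{diag}}$ controlled by the angle bound \eqref{eq: 2 closed}. Your decomposition is correct---the within-coordinate couplings $\mathcal P_j\leftrightarrow\mathcal P_{j'}$ are indeed multiplication-type and non-compact, while every other coupling is Hilbert--Schmidt or finite rank---and the Neumann-series inversion of each $B_j=I+N_j$ follows as you say. So your proof is a legitimate, self-contained completion of an argument the paper leaves terse. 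A shorter alternative, closer to the paper's toolkit, is to note that $(I+\mathcal S_n)g=\bigl(\mathcal P_k\tilde g\bigr)_{k}$ with $\tilde g=\sum_l g_l$, so $I+\mathcal S_n=T\circ\Sigma$ where $\Sigma g=\tilde g$ and $Th=(\mathcal P_k h)_k$; in the product-$\|\cdot\|_n$ pairing one has $T^*=\Sigma$, and Lemma~\ref{lem:Hadd closed} with Proposition~\ref{lprop:Hadd} makes $\Sigma$ bounded below, whence $T$ is surjective. Either route closes the gap; yours has the merit of being explicit about why injectivity suffices here.
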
 
	\begin{proof}
		Suppose that for some $g \in \mathcal G^n $ it holds that $(I+\mathcal S_{n})(g)=0$. We have to show that this implies $g=0$.\newline
		For the proof of this claim note that $g_k + \mathcal S_{n,k}  (g) $ is the orthogonal projection of $\sum_{j=0} ^{2d} g_j$ onto $\mathcal H_k$. Furthermore, we have that $g_k$ is an element of $\mathcal H_k$. This gives that 
		$$\left\langle g_k , \sum_{j=0} ^{2d} g_j\right\rangle_n =0.$$ Summing over $k$ gives
		$$\left\langle  \sum_{j=0} ^{2d} g_j , \sum_{j=0} ^{2d} g_j\right\rangle_n =0.$$
		According to Lemma \ref{lem:uni Hadd} on the event $\mathcal E$ we have $g_0 = 0$ and $g_j \equiv 0$ for $j=1,\dots,2d$. This concludes the proof of the lemma.
	\end{proof}
	
	One can show that under conditions (A1) -- (A4)  the probability of the event $\mathcal E$  converges to one. Note that we have assumed that $\mathcal X=\prod_{j=1} ^d \mathcal X_j$. We conclude that under (A1) -- (A4) $I+\mathcal S_{n}$ is invertible with probability tending to one.\newline
	Thus we have that with probability tending to one
	\begin{eqnarray} \label{eq:back3}  &&\hat m^n - m -\bar m^n(\varepsilon)- \beta_n + \Delta_n m  + \Delta_n \beta_n \\ \nonumber
	&& \qquad= ( I+ \mathcal S_{n})^{-1}   ( I+ \mathcal S_{n}) (\hat  m^n - m - \bar m^n(\varepsilon)-\beta_n + \Delta_n m  + \Delta_n \beta_n),\end{eqnarray} 
	where 
	$m$ has components $m_0,\dots,m_{2d}$ with $m_0,\dots, m_d$ as in \eqref{eq:model} and with $m_{j'} = m_j^{\prime}$ for $1 \leq j \leq d$. 
	Furthermore, $\beta(x)$ has components $\beta_0=0$, $\beta_j(x_j)$ and
	\begin{align*}
	\beta_{j'}(x_j)&=\frac 1 2 m_j^{''}(x_j)\ \frac {b_{j,0}(x_j) b_{j,3}(x_j)- b_{j,1}(x_j) b_{j,2}(x_j)}{b_{j,0}(x_j) b_{j,2}(x_j)- b_{j,1}(x_j) ^2} h
	\end{align*}
	for $j=1,\dots,d$ with $ b_{j,l}(x_j)$ defined above. 
	Additionally, the norming constants are given by
	\begin{align*}
	&(\Delta_n \beta)_j = \int \beta_j (x_j) \hat p_j(x_j) \mathrm d x_j,\\
	&(\Delta_n m)_j = \int m_j (x_j) \hat p_j(x_j) \mathrm d x_j, \\
	&(\Delta_n \beta)_{j'} = (\Delta_n m)_{j'}=0
	\quad \text{for}\ j=1,\dots,d,\\
	&(\Delta_n \beta)_0 =  \sum_{j=1} ^{d}  \int \beta_{j'} (x_j) \hat p^*_j(x_j) \mathrm d x_j,\\
	&(\Delta_n m)_0 =  \sum_{j=1} ^{d}  \int m_{j'} (x_j) \hat p^*_j(x_j) \mathrm d x_j.
	\end{align*}
	One can verify that  for $a_j+ 2 h_j \leq x_j \leq b_j - 2 h_j$ one has $\beta_{j'}(x_j)=o_P(h)$. We have already seen that $\beta_j(x_j) = \frac 1 2 m_j^{''}(x_j)  \int k(v)v^2\mathrm dv + o_P(h^2)$ holds for such $x_j$.\newline
	For the statement of Theorem \ref{theo: asymp} we have to show that for $1 \leq j \leq d$ the $j$-th component on the left hand side of equation \eqref{eq:back3} is of order $o_P(h^2)$ uniformly for $a_j +2h \leq x_j \leq b_j - 2h$.\newline
	For a proof of this claim we first analyze the term 
	\begin{align} \label{eq:back4}  D_n&=( I+ \mathcal S_{n}) (\hat  m^n - m -\bar m^n(\varepsilon) - \beta_n+ \Delta_n m  + \Delta_n \beta_n) \\ \nonumber
	& =  \bar m^n(Y) - ( I+ \mathcal S_{n}) (m +\bar m^n(\varepsilon) + \beta_n - \Delta_n m  - \Delta_n \beta_n).\end{align}
	For this sake we split the term $ \bar m^n(Y)$ into the sum of a stochastic variance term and a deterministic expectation term:
	\begin{align}\label{error_decomposition}
	\bar m^n(Y)  = \bar m^n(\varepsilon) + \sum_{j=0}^d\bar m^n(\mu_{n,j}),
	\end{align}
	where
	\begin{align*}
	\varepsilon &= Y-\sum_{j=0}^d \mu_{n,j},\\
	\mu_{n,j} &=(m_j(X_{ij}))_{i=1,..,n}  \quad \text{for}\ j=1,\dots,d,\\
	\mu_{n,0} &=(m_0)_{i=1,..,n} .
	\end{align*}
	\newline
	We write $D_n=D_n^{\beta}+D_n^\varepsilon$, with
	\begin{align*}
	D_n^{\beta}&= \sum_{j=0}^d\bar m^n(\mu_{n,j}) - ( I+ \mathcal S_{n}) (m + \beta_n - \Delta_n m  - \Delta_n \beta_n) , \\
	D_n^{\varepsilon}&= \mathcal S_{n} (\bar m^n(\varepsilon)).
	\end{align*}
	The following lemma treats the conditional expectation term  $D^{\beta}$.
	
	\begin{lemma}\label{lemma:bias} Assume (A1) -- (A4).
		It holds 	
		$D^\beta_{n,0} = o_p(h^2) $ and 
		\[
		\sup_{x_k\in \mathcal X_k}\big|D^\beta_{n,k}(x_k)\big|=
		\begin{cases} o_p(h^2) &\text{for } \ 1\leq k\leq d, \\
		o_p(h) &\text{for } \ d+1\leq k\leq 2d .
		\end{cases}
		\]
	\end{lemma}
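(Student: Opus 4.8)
The plan is to reduce the claim to a single orthogonal projection and then run a local-linear bias expansion on the resulting residual, using the very definitions of $\beta$ to cancel the leading terms. By linearity $\sum_{j=0}^d \bar m^n(\mu_{n,j}) = \bar m^n(\mu)$ with $\mu = (m_0 + \sum_{l=1}^d m_l(X_{il}))_{i=1,\dots,n}$, and, repeating the derivation of \eqref{eq:back}, one checks that for every $z\in\mathbb R^n$ and every $k$ the component $\bar m^n_k(z)$ equals the orthogonal projection $\mathcal P_k(z^{\mathrm{emb}})$ of the element $z^{\mathrm{emb}}\in\mathcal H$ with slots $z^{i,0}\equiv z_i$, $z^{i,j}\equiv 0$. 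Since $b := m + \beta_n - \Delta_n m - \Delta_n\beta_n$ is an additive element of $\mathcal H_{add}$, each $b_k$ lies in $\mathcal H_k$, so $[(I+\mathcal S_n)b]_k = b_k + \mathcal P_k(\sum_{l\neq k} b_l) = \mathcal P_k(b)$. Hence
\[
D^\beta_{n,k} = \mathcal P_k\big(\mu^{\mathrm{emb}} - b\big)=:\mathcal P_k(r)
\]
for all $k=0,\dots,2d$, and it suffices to bound the projection of the single element $r$ onto each $\mathcal H_k$.

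Next I would write out the value fed into the projection. Collecting the $(i,0)$-slot of $r$ together with its derivative slots $r^{i,l}=-m_l'(x_l)-\beta_{l'}(x_l)$, the integrand entering $\langle r,\cdot\rangle_n$ becomes
\[
\sum_{l=1}^d\big[m_l(X_{il}) - m_l(x_l) - m_l'(x_l)(X_{il}-x_l)\big] - \sum_{l=1}^d\big[\beta_l(x_l) + \beta_{l'}(x_l)(X_{il}-x_l)\big] + C_n,
\]
where $C_n$ is the constant produced by the norming terms $\Delta_n m,\Delta_n\beta_n$. The key point is that the first-order Taylor pieces $m_l'(x_l)(X_{il}-x_l)$ are cancelled exactly by the derivative slots $-m_l'(x_l)$; this is the defining feature of local linear smoothing and is what removes the design-dependent part of the bias. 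A second-order expansion then reduces each bracket to $\tfrac12 m_l''(x_l)(X_{il}-x_l)^2$ plus an $O(h^3)$ remainder.

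Third I would separate the diagonal term $l=k$ from the off-diagonal terms. Projecting the diagonal quadratic $\tfrac12 m_k''(x_k)(X_{ik}-x_k)^2$ onto $\mathcal H_k$ (resp.\ $\mathcal H_{k'}$) reproduces exactly the local-linear bias encoded in the kernel moments $b_{k,l}$, namely $\beta_k(x_k)$ (resp.\ $\beta_{k'}(x_k)$), so subtracting the matching piece of $b$ annihilates the leading term and leaves $o_P(h^2)$ (resp.\ $o_P(h)$). For $l\neq k$, integrating the kernel in the $x_l$-direction turns $\tfrac12 m_l''(x_l)(X_{il}-x_l)^2$ into a quantity agreeing to leading order with the projection of $\beta_l$, which is precisely why $b$ must carry all the $\beta_l$; here (A2) lets me replace the empirical kernel averages $\hat p_k,\hat p_k^{*},\hat p_k^{**}$ and the joint marginals by their continuous, strictly positive population limits and show the mismatch is of smaller order. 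Uniformity over $x_k\in\mathcal X_k$ and over $1\leq k\leq d$ then follows from uniform laws of large numbers for these empirical marginals, with the remainders and the constant $C_n$ absorbed using (A3)--(A4).

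The step I expect to be the main obstacle is the off-diagonal and boundary bookkeeping: making the first-order cancellation rigorous together with a sharp control of the third-order remainders and of the cross-projections $\mathcal P_k(\beta_l)$, uniformly up to the boundary of $\mathcal X_k$, where $\beta$ does not simplify and the moments $b_{k,l}$ genuinely matter. Combining the explicit projection formulas with the uniform convergence of the empirical marginals to upgrade the crude $O(h^2)$ and $O(h)$ bounds to the claimed $o_P(h^2)$ and $o_P(h)$ rates is the delicate part.
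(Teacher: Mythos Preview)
Your proposal is correct and follows the same route the paper indicates: the paper's own proof is a two-line sketch invoking ``lengthy calculations using second order Taylor expansions for $m_j(X_{ij})$ and \dots\ laws of large numbers,'' and your projection reformulation $D^\beta_{n,k}=\mathcal P_k(\mu^{\mathrm{emb}}-b)$ followed by the second-order Taylor expansion and uniform LLN for the empirical marginals carries exactly this programme out in more detail. One small correction: under (A3) the functions $m_j$ are only assumed $C^2$, so the remainder after the quadratic Taylor term is $o(h^2)$ uniformly rather than $O(h^3)$, which is still precisely what you need.
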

	\begin{proof}
		The lemma follows by application of  lengthy calculations using second order Taylor expansions 
		for $m_j(X_{ij})$ and by application of laws of large numbers.
	\end{proof}
	We now turn to the variance term.

	\begin{lemma}\label{lemma:var} Assume (A1) -- (A4).
		It holds $D^\varepsilon_{n,0} = o_p(h^2) $ and 
		\[
		\sup_{x_k\in \mathcal X_k}\big|D^\varepsilon_{n,k}(x_k)\big|=
		\begin{cases} o_p(h^2) &\text{for } \ 1\leq k\leq d, \\
		o_p(h) &\text{for } \ d+1\leq k\leq 2d .
		\end{cases}
		\]
	\end{lemma}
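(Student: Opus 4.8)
The plan is to exploit that $D^\varepsilon_n=\mathcal S_n(\bar m^n(\varepsilon))$ is built \emph{entirely} out of off-diagonal terms: by the definition of $\mathcal S_{n,k}$ the component $D^\varepsilon_{n,k}$ is a finite sum of cross-projections $\mathcal P_k(\bar m^n_l(\varepsilon))$ with $l\neq k$, and no self-term $\mathcal P_k(\bar m^n_k(\varepsilon))$ ever appears. The starting point is therefore to insert the explicit projection formulas from Appendix~\ref{sec: projection operators} and, in each summand, to interchange the finite average over $i$ (hidden inside $\bar m^n_l(\varepsilon)$) with the integral over $x_l$ (hidden inside $\mathcal P_k$). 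This rewrites each summand as a single linear form $n^{-1}\sum_{i=1}^n(\varepsilon_i-\bar\varepsilon)\,W_{kl,i}(x_k)$ in the errors, which is the object I want to control.

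The key quantitative point, and the reason the variance term is negligible at the faster rate, is that the weights $W_{kl,i}$ are \emph{bounded and smooth}, so that these linear forms are global $\sqrt n$-averages rather than local $\sqrt{nh}$-averages. For a genuine cross-coordinate term ($l\in\{1,\dots,d\}$, $l\neq k$, or the corresponding derivative index) the inner integral over $x_l$ of the univariate kernel smoother against the smooth joint marginal $\hat p_{lk}$ (resp.\ $\hat p^{*}_{lk}$) collapses, after the substitution $u=(X_{il}-x_l)/h$ and $\int\kappa=1$, to a bounded weight of the form $W_{kl,i}(x_k)\approx \hat p_k(x_k)^{-1}\hat p_l(X_{il})^{-1}\hat p_{lk}(X_{il},x_k)$; on the event $\mathcal E$ and under (A2) the denominators are bounded away from zero and the $W_{kl,i}$ are uniformly Lipschitz in $x_k$. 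The constant input $l=0$ projects to $0$. The only same-coordinate input, namely the derivative component $l=k+d$ feeding the level output $k$, is not integrated in a foreign direction; instead its projection equals $\bar m^n_{k+d}(\varepsilon)(x_k)\,\hat p^{*}_k(x_k)/\hat p_k(x_k)$ up to centering, and here the extra factor $\hat p^{*}_k/\hat p_k=O(h)$ (indeed $O(h^2)$ in the interior, by symmetry of $\kappa$) multiplies the $O_P(\{nh\}^{-1/2})$ smoother to give $o_P(h^2)$.

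Having reduced each component to linear forms with bounded smooth weights, the conclusion follows from two estimates. Pointwise, conditionally on the covariates each form has mean zero (since $\mathbb E[\varepsilon_i\mid X_i]=0$) and variance $O(n^{-1})$ because the weights are bounded and the conditional variances are bounded under (A3); hence each term is $O_P(n^{-1/2})$. Uniformity over $x_k$ is then obtained from the Lipschitz continuity of $x_k\mapsto W_{kl,i}(x_k)$ by a standard chaining/maximal-inequality argument, combined with the moment bound $\sup_x \mathbb E[|\varepsilon_i|^{5/2}\mid X_i=x]<\infty$ of (A3), which forces $\max_{i\le n}|\varepsilon_i|=o_P(n^{2/5})$ and thereby controls the truncated tail contribution; this yields $\sup_{x_k}|\cdot|=O_P(n^{-1/2}\sqrt{\log n})$. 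Since $n^{-1/2}\sqrt{\log n}=o(n^{-2/5})=o(h^2)$, every component $D^\varepsilon_{n,k}$ with $0\le k\le 2d$ is $o_P(h^2)$, which in particular gives the required $o_P(h)$ for the derivative outputs $d+1\le k\le 2d$. The main obstacle I anticipate is not the pointwise algebra but making the local-to-global collapse and the uniform bound fully rigorous near the boundary of $\mathcal X_k$, where the boundary correction of the kernel distorts $\hat p^{*}_k$, $\hat p^{**}_k$ and the joint marginals: one must verify that the denominators stay bounded below and that the collapse still produces a bounded weight there, so that the $\sqrt n$-rate (and the $O(h)$ gain in the same-coordinate term) survives uniformly up to the edges.
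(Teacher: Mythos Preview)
Your strategy is essentially the paper's: write $D^\varepsilon_{n,k}(x_k)$ as a weighted linear form $n^{-1}\sum_i\varepsilon_iW_i(x_k)$, read off the pointwise rate from the size of the weights, and upgrade to a uniform bound via the moment condition in (A3). For the \emph{level} outputs $k\in\{0,1,\dots,d\}$ your collapse argument is correct and matches the paper's claim that the weights are ``of the same order for all $i$'' and hence the sums are $O_P(n^{-1/2})$.

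The gap is in the derivative outputs $k'=k+d$. You only analyse level outputs explicitly and then assert that ``every component $D^\varepsilon_{n,k}$ with $0\le k\le 2d$ is $o_P(h^2)$''. That overclaim is false. The projection $\mathcal P_{k'}$ divides by $\hat p^{**}_k(x_k)\asymp h^2$ (see the formulas in Appendix~\ref{sec: projection operators}), while the numerators $\hat p^{*}_{kj}$ and $\hat p^{**}_{jk}$ are only $O(h)$ and $O(h^2)$ respectively; after the local-to-global collapse the resulting weights $W_{k'l,i}(x_k)$ are $O(h^{-1})$, not bounded. Consequently the pointwise rate for $D^\varepsilon_{n,k'}$ is $O_P(h^{-1}n^{-1/2})=O_P(n^{-3/10})$, which is $o_P(h)=o_P(n^{-1/5})$ but \emph{not} $o_P(h^2)=o_P(n^{-2/5})$; this is exactly why the lemma only claims $o_P(h)$ for $d+1\le k\le 2d$ and why the paper records the pointwise order $O_P(h^{-1}n^{-1/2})$ there. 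A symmetric caution applies to the same-coordinate level input $l=k$ feeding the derivative output $k'$: the relevant factor is $\hat p^{*}_k/\hat p^{**}_k$, which is $O(h^{-1})$ near the boundary, not $O(h)$. Fixing this is easy---just redo the weight bookkeeping for the $k'$ components---but as written your argument does not establish the derivative part of the lemma.
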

	\begin{proof}
		One can easily check that  $D^\varepsilon_{n,k}(x_k)$ consists of weighted sums of $\varepsilon_i$ where the weights are of the same order for all $1 \leq i \leq n$. For fixed $x_k$ the sums are of order $O_P(n^{-1/2})$ for $1 \leq k \leq d$ and of order $O_P(h^{-1} n^{-1/2})$ for $d+1 \leq k \leq 2d$. Using the conditional moment conditions on $\varepsilon_i$ in Assumption (A3) we get the uniform rates stated in the lemma.
	\end{proof}
	
	It remains to study the behaviour of  $( I+ \mathcal S_{n})^{-1} D_n^\varepsilon$ and $( I+ \mathcal S_{n})^{-1} D_n^\beta$. We will use a small transformation of $\mathcal S_{n}$ here which is better suitable for an inversion. Define the following $2 \times 2$ matrix $A_{n,k}(x)$ by
	$$A_{n,k}(x) =\frac 1 {\hat p_k \hat p_k^{**} - (\hat p_k^{*})^2} \left(\begin{array}{cc}\hat p_k^{**}\hat p_k & \hat p_k^{**}\hat p_k^{*} \\\hat p_k^{*}\hat p_k & \hat p_k\hat p_k^{**}\end{array}\right)(x_k).$$
	Furthermore, define the $2d \times 2d$ matrix $A_n(x)$ where the elements with indices $(k,k), (k,k'),$  $ (k', k),(k',k')$ are equal to the elements of $A_{n,k}(x)$ with indices $(1,1), (1,2), (2,1),(2,2)$. We now define $\tilde {\mathcal S}_{n}$ by the equation $I+ \tilde {\mathcal S}_{n} = A_n(I+ \mathcal S_{n})$. Below we will make use of the fact that $\tilde {\mathcal S}_{n}$ is of the form 
	\begin{eqnarray} \label{eq:AQ1} \tilde {\mathcal S}_{n,k}m(x) = \sum_{l \not \in \{k,k'\}} \int q_{k,l} (x_k,u) m_l(u) \mathrm d u+ \sum_{l  \in \{k,k'\}} \int q_{l} (u) m_l(u) \mathrm d u,\\
	\label{eq:AQ2} \tilde {\mathcal S}_{n,k'}m(x) = \sum_{l \not \in \{k,k'\}} \int q_{k',l} (x_k,u) m_l(u) \mathrm d u+ \sum_{l  \in \{k,k'\}} \int q_{l} (u) m_l(u) \mathrm d u\end{eqnarray}
	for $1 \leq k \leq d$ with some random functions $q_{k,l},q_{l}$ which fulfill  that $\int  q_{k,l}(x_k, u ) ^2 \mathrm d u$ and  $\int  q_k( u ) ^2 \mathrm d u$ are of order $O_P(1)$ uniformly over $1 \leq k,l\leq 2d$ and $x_k$.

	Note that we need   $\tilde {\mathcal S}_{n}$ because ${\mathcal S}_{n}$ can not be written in the form of \eqref{eq:AQ1} and \eqref{eq:AQ2}. The operator $\tilde {\mathcal S}_{n}$ differs from ${\mathcal S}_{n}$ 
	in the $h$-neighbourhood of the boundary by terms of order $h^2$. Otherwise the difference is  of order $o_p(h^2)$.
	Outside of the $h$-neighbourhood of the boundary, for $n \to \infty$,  the matrix $A_n(x)$ converges to the identity matrix.
	Thus   $\tilde {\mathcal S}_{n}$  is a second order modification of ${\mathcal S}_{n}$  with the advantage of having \eqref{eq:AQ1}-\eqref{eq:AQ2}.\newline
	For our further discussion we now introduce the space $\mathcal G^0$ of tuples $f=(f_0, f_1,\dots,$ $f_{2d})$ with $f_0=0$ and $f_k,f_{k'} : \mathcal X_k \to \mathbb R$ with $\int f_k(x_k) p_k(x_k) \mathrm d x_k =0$ and endow it with the norm $\|f\|^2 = \sum _{k=1} ^{d} (f_k(x_k)^2  + f_{k'}(x_k)^2) p_k(x_k) \mathrm d x_k $. 
	The next lemma shows that the norm of  $H_n( I+ \mathcal S_{n})^{-1} D_n^\varepsilon$ and $H_n( I+ \mathcal S_{n})^{-1} D_n^\beta$ is of order $o_P(h^2)$. Here $H_n$ is a diagonal matrix where the first $d+1$ diagonal elements equal 1. The remaining elements are equal to $h$.
	
	\begin{lemma}\label{lemma:rate of norm} Assume (A1) -- (A4).
		Then it holds that
		$\|H_n( I+ \mathcal S_{n})^{-1} D_n^*\| = \|H_n( I+\tilde{ \mathcal Q}_{n})^{-1} A_nD_n^*\|=o_P(h^2)$ for $D_n^*=D_n^\varepsilon$ and $D_n^*= D_n^\beta$.\end{lemma}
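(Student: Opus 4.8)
The plan is to reduce the statement to two facts: that a suitably rescaled version of $I+\tilde{\mathcal S}_n$ has an inverse whose operator norm is $O_P(1)$, and that the right-hand side $H_nA_nD_n^*$ is already of order $o_P(h^2)$. The first equality in the lemma is immediate from the defining relation $I+\tilde{\mathcal S}_n = A_n(I+\mathcal S_n)$: on the event $\mathcal E$ the operator $I+\mathcal S_n$ is invertible by Lemma \ref{lem:invert}, so $A_n$ is invertible there as well and $(I+\mathcal S_n)^{-1} = (I+\tilde{\mathcal S}_n)^{-1}A_n$, whence $(I+\mathcal S_n)^{-1}D_n^* = (I+\tilde{\mathcal S}_n)^{-1}A_nD_n^*$. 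It therefore suffices to bound $\|H_n(I+\tilde{\mathcal S}_n)^{-1}A_nD_n^*\|$.

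To handle the factor $H_n$ I would conjugate. Setting $\bar{\mathcal S}_n := H_n\tilde{\mathcal S}_n H_n^{-1}$ gives $H_n(I+\tilde{\mathcal S}_n)^{-1} = (I+\bar{\mathcal S}_n)^{-1}H_n$, so that
\[
H_n(I+\mathcal S_n)^{-1}D_n^* = (I+\bar{\mathcal S}_n)^{-1}\,H_nA_nD_n^*,
\]
and the claim follows once I show $\|(I+\bar{\mathcal S}_n)^{-1}\|_{op} = O_P(1)$ together with $\|H_nA_nD_n^*\| = o_P(h^2)$. For the latter I would use the orders $\hat p_k = O_P(1)$, $\hat p_k^* = O_P(h)$, $\hat p_k^{**} = O_P(h^2)$, together with the strict Cauchy--Schwarz bound $\hat p_k\hat p_k^{**}-(\hat p_k^*)^2 \geq (1-R)\hat p_k\hat p_k^{**}$ of order $h^2$ established in the proof of Lemma \ref{lem:Hadd closed}. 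A short computation then shows that the conjugated matrix $H_nA_nH_n^{-1}$ has all entries of order $O_P(1)$, so $\|H_nA_nH_n^{-1}\|_{op}=O_P(1)$. By Lemmas \ref{lemma:bias} and \ref{lemma:var} the components of $D_n^*=D_n^\beta+D_n^\varepsilon$ are $o_P(h^2)$ uniformly for indices $0,1,\dots,d$ and $o_P(h)$ uniformly for indices $d+1,\dots,2d$; since $H_n$ multiplies exactly the latter components by $h$ and the $\mathcal G^0$-norm is an $L_2$-norm over a bounded domain, these uniform bounds yield $\|H_nD_n^*\| = o_P(h^2)$. Combining, $\|H_nA_nD_n^*\| \leq \|H_nA_nH_n^{-1}\|_{op}\|H_nD_n^*\| = O_P(1)\,o_P(h^2) = o_P(h^2)$.

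The hard part will be the uniform inverse bound $\|(I+\bar{\mathcal S}_n)^{-1}\|_{op}=O_P(1)$, since Lemma \ref{lem:invert} only gives invertibility for each fixed realisation on $\mathcal E$ and no quantitative control. Here I would exploit that, by the representation \eqref{eq:AQ1}--\eqref{eq:AQ2} with $L_2$-bounded kernels $q_{k,l},q_l$, the operators $\tilde{\mathcal S}_n$ and hence $\bar{\mathcal S}_n$ are Hilbert--Schmidt with $O_P(1)$ Hilbert--Schmidt norm, and that under (A2) they converge in operator norm to a deterministic compact limit $\bar{\mathcal S}$ obtained by replacing $\hat p_k,\hat p_k^*,\hat p_k^{**}$ and the empirical kernels by their population counterparts. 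The limit operator $I+\bar{\mathcal S}$ is injective by the population analogue of the argument in Lemma \ref{lem:invert} (using that, under (A2), the covariate distribution is not supported on a proper affine subspace), and injectivity of a compact perturbation of the identity implies invertibility by the Fredholm alternative.

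Finally I would close the argument with a resolvent perturbation bound. Since $\|\bar{\mathcal S}_n-\bar{\mathcal S}\|_{op}=o_P(1)$, the inequality $\|(I+\bar{\mathcal S})^{-1}\|_{op}\,\|\bar{\mathcal S}_n-\bar{\mathcal S}\|_{op}<1/2$ holds with probability tending to one, and on that event the Neumann-series estimate gives $\|(I+\bar{\mathcal S}_n)^{-1}\|_{op}\leq 2\|(I+\bar{\mathcal S})^{-1}\|_{op}=O_P(1)$. Plugging this together with the bound $\|H_nA_nD_n^*\|=o_P(h^2)$ into the displayed identity yields $\|H_n(I+\mathcal S_n)^{-1}D_n^*\|=o_P(h^2)$ for both $D_n^*=D_n^\varepsilon$ and $D_n^*=D_n^\beta$, completing the proof.
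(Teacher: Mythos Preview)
Your approach is essentially the paper's: conjugate by $H_n$, show that $H_n\tilde{\mathcal S}_nH_n^{-1}$ converges in operator norm to a deterministic limit whose invertibility is established separately (this is exactly Lemma~\ref{lemma:bd inverse}, which the paper proves by a direct argument rather than the Fredholm alternative you invoke), and close with a Neumann-series perturbation bound. The one step you gloss over is that $D_n^*$ is centred with respect to $\hat p_k$ rather than $p_k$ and so does not literally lie in $\mathcal G^0$; the paper handles this by first replacing $D_n^*$ with its $p_k$-centred version $\bar D_n^*$, a routine adjustment.
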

	\begin{proof}
		Define  $\bar D_n^\varepsilon$ and $\bar D_n^\beta$ by $\bar D_{n_k}^\varepsilon(x_k) = D_{n,k}^\varepsilon(x_k) - \int D_{n,k}^\varepsilon(u_k) p_k(u_k) \mathrm d u_k$ and $\bar D_{n,k}^\beta(x_k) = D_{n,k}^\beta(x_k) - \int  D_{n,k}^\beta(u_k) p_k(u_k) \mathrm d u_k$ for $1 \leq k \leq d$ and $\bar D_{n,k}^\varepsilon= D_{n,k}^\varepsilon$ and $\bar D_{n,k}^\beta=D_{n,k}^\beta$, otherwise. 		It can be checked that it suffices to prove the lemma with $ D_n^\varepsilon$ and $D_n^\beta$ replaced by $\bar D_n^\varepsilon$ and $\bar D_n^\beta$. Note that $\bar D_n^\varepsilon$ and $\bar D_n^\beta$ are elements of $\mathcal G^0$. For the proof of this claim we compare the operator $\tilde{ \mathcal S}_{n}$ with the operator
		$\mathcal S_{0}$ defined by $\mathcal S_{0,0}(g) =0$, $\mathcal S_{0,k'}(g)(x_k) = 0$ and $$\mathcal S_{0,k}(g)(x_k) = \sum_{j\not = k} \int_{\mathcal X_j} g_j(u_j) \frac {p_{j,k} (u_j,x_k)}{p_{k} (x_k)} \mathrm d u_j$$
		for $1 \leq k \leq d$.
		By standard kernel smoothing theory one can show that 
		$$ \sup _{g\in \mathcal G^0, \|g \| \leq 1} \| (\mathcal S_{0} - H_n \tilde{ \mathcal S}_{n} H_n^{-1}) g \| = o_P(1).$$
		For the proof of this claim one makes use of the fact that non-vanishing differences in the $h$-neighbourhood of the boundary are 	asymptotically 	 negligible in the calculation of the norm  because the size of the neighbourhood converges to zero.\newline
		In the next lemma we will show that $I + \mathcal S_{0}$ has a bounded inverse. This implies the statement of the lemma by applying the following expansion:
		\begin{eqnarray*}
			&&(I + H_n \tilde{ \mathcal S}_{n} H_n^{-1})^{-1}- (I + \mathcal S_{0})^{-1} \\
			&& \qquad = (I + \mathcal S_{0})^{-1} ((I + \mathcal S_{0}) (I + H_n \tilde{ \mathcal S}_{n} H_n^{-1})^{-1} -I)\\
			&& \qquad = (I + \mathcal S_{0})^{-1}(((I + H_n \tilde{ \mathcal S}_{n} H_n^{-1}) (I + \mathcal S_{0})^{-1} )^{-1} -I)\\
			&& \qquad = (I + \mathcal S_{0})^{-1}  ( (I + (H_n \tilde{ \mathcal S}_{n} H_n^{-1}-  \mathcal S_{0}) (I + \mathcal S_{0})^{-1} )^{-1} -I)
			\\
			&& \qquad = (I + \mathcal S_{0})^{-1}  \sum_{j=1} ^\infty (I + (-1)^j(H_n \tilde{ \mathcal S}_{n} H_n^{-1}-  \mathcal S_{0}) (I + \mathcal S_{0})^{-1} )^j.
		\end{eqnarray*}
		This shows the lemma because of $H_n( I+ \tilde{ \mathcal S}_{n})^{-1}A_nD_n^* = H_n( I+ \tilde{ \mathcal Q}_{n})^{-1}H_n^{-1}H_n A_nD_n^* = (I+H_n\tilde{ \mathcal S}_{n} H_n^{-1} )^{-1}H_n A_nD_n^* $ for $D_n^*=D_n^\varepsilon$ and $D_n^*=D_n^\beta$. \end{proof}
	
	\begin{lemma}\label{lemma:bd inverse} Assume (A1) -- (A4).
		The operator $I+ \mathcal S_{0}: \mathcal G^0 \to \mathcal G^0$ is bijective and has a bounded inverse.
	\end{lemma}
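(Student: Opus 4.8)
The plan is to reduce the statement to the classical Fredholm theory for the population backfitting operator, which here collapses to a self-adjoint, positive, compact perturbation of the identity on the space of additive mean-zero functions. First I would exploit the block structure of $\mathcal S_0$: by definition $\mathcal S_{0,0}=0$, $\mathcal S_{0,k'}=0$, and $\mathcal S_{0,k}$ reads only the function components $g_1,\dots,g_d$. Writing $\mathcal G^0=\mathcal G^0_{\mathrm{fun}}\oplus\mathcal G^0_{\mathrm{der}}$ with $\mathcal G^0_{\mathrm{fun}}=\{f\in\mathcal G^0:f_{k'}\equiv 0\}$ and $\mathcal G^0_{\mathrm{der}}=\{f\in\mathcal G^0:f_k\equiv 0,\,1\le k\le d\}$, the operator $I+\mathcal S_0$ becomes block diagonal, acting as the identity on $\mathcal G^0_{\mathrm{der}}$ and as $I+T$ on $\mathcal G^0_{\mathrm{fun}}$, where
\[
(Tf)_k(x_k)=\sum_{j\neq k}\int f_j(u_j)\frac{p_{jk}(u_j,x_k)}{p_k(x_k)}\,\mathrm d u_j=\mathbb E\Big[\sum_{j\neq k}f_j(X_j)\,\Big|\,X_k=x_k\Big].
\]
Since the identity block is trivially boundedly invertible, it suffices to treat $I+T$ on $\mathcal G^0_{\mathrm{fun}}=\bigoplus_{k=1}^d\{f_k\in\mathrm{L}_2(p_k):\int f_k p_k=0\}$.

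Second I would verify that $T$ is compact. Each off-diagonal block is an integral operator $\mathrm{L}_2(p_j)\to\mathrm{L}_2(p_k)$ whose kernel with respect to the domain measure $p_j(u_j)\,\mathrm d u_j$ is $p_{jk}(u_j,x_k)/\big(p_j(u_j)p_k(x_k)\big)$. Under (A2) this kernel is bounded, because $p_{jk}$ is continuous hence bounded above on the compact rectangle $[a_j,b_j]\times[a_k,b_k]$, while $p_j$ and $p_k$ are continuous and strictly positive on their compact supports hence bounded below. As the underlying measures are finite, each block is Hilbert--Schmidt, so the finite sum $T$ is Hilbert--Schmidt and in particular compact.

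Third I would prove injectivity of $I+T$ through a quadratic-form identity. Using the tower property one checks, for $f\in\mathcal G^0_{\mathrm{fun}}$,
\[
\innerproduct{(I+T)f}{f}=\mathbb E\Big[\big(\textstyle\sum_{j=1}^d f_j(X_j)\big)^2\Big]\ge 0,
\]
the right-hand side being the squared $\mathrm{L}_2$-norm of the additive function $\sum_j f_j$ under the covariate distribution; in particular $I+T$ is self-adjoint and positive semidefinite. If $(I+T)f=0$ then this quantity vanishes, i.e.\ $\sum_j f_j(x_j)=0$ for almost every $x$ with respect to the covariate law; combined with the norming constraints $\int f_j p_j=0$ and the population identifiability of the additive components, this forces $f_j\equiv 0$ for every $j$, so $I+T$ is injective.

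Finally I would invoke the Fredholm alternative: for the compact operator $T$, injectivity of $I+T$ already yields surjectivity, hence bijectivity, and the inverse is then bounded (equivalently, $-1\notin\sigma(T)$, so $0\notin\sigma(I+T)$). Undoing the block decomposition gives that $I+\mathcal S_0$ is bijective with bounded inverse on $\mathcal G^0$. I expect the genuine obstacle to be the injectivity step, namely passing from $\sum_j f_j=0$ almost everywhere to $f_j\equiv 0$ for all $j$: this is exactly the absence of concurvity in the additive model, and is where the full-support structure of the covariate distribution imposed in (A2) enters; the compactness and the Fredholm conclusion are then routine. Equivalently, the conclusion can be phrased as the coercivity bound $\mathbb E[(\sum_j f_j(X_j))^2]\ge c\sum_j\mathbb E[f_j(X_j)^2]$ for some $c>0$, whose positivity follows by combining compactness with identifiability and which yields the bounded inverse directly.
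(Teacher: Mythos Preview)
Your proposal is correct and reaches the same conclusion through a somewhat different, arguably cleaner, packaging. Both you and the paper first strip off the trivial blocks ($f_0$ and the derivative components $f_{k'}$) and reduce to the operator on centred function tuples $(f_1,\dots,f_d)$; both use the quadratic-form identity $\innerproduct{(I+T)f}{f}=\mathbb E\big[(\sum_j f_j(X_j))^2\big]$ as the key ingredient. From there the routes diverge: you establish compactness of $T$ directly via the Hilbert--Schmidt bound on the kernels $p_{jk}/(p_jp_k)$ (valid under (A2)), prove injectivity from identifiability of additive components, and then invoke the Fredholm alternative to get bijectivity and a bounded inverse in one stroke. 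The paper instead applies the bounded inverse theorem, proving surjectivity by showing (i) that the range is closed and (ii) that its orthogonal complement is trivial; step (i) is obtained from the quadratic form together with Proposition~\ref{lprop:Hadd}(ii), which in turn presupposes closedness of the sum of the embedded spaces $\{g_k(X_k)\}$ in $\mathrm L_2(P_X)$ --- a fact that ultimately rests on the same Hilbert--Schmidt/compactness argument you make explicit (cf.\ the proof of Lemma~\ref{lem:Hadd closed}). Your Fredholm route is more self-contained and standard; the paper's route has the advantage of staying within the projection/gap framework it has already developed. You correctly identify the only non-routine point as population identifiability ($\sum_j f_j=0$ a.e.\ $\Rightarrow$ each $f_j\equiv 0$), which is where the product-domain structure assumed at the start of Section~\ref{sec:asymp} together with (A2) enters.
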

	\begin{proof}
		For a proof of this claim it suffices to show that the operator $I+ \mathcal S_{*}: \mathcal G^* \to \mathcal G^*$ is bijective and has a bounded inverse where  $\mathcal G^*$ is the space of tuples $f=(f_1,\dots,f_{d})$ where  $f_k : \mathcal X_j \to \mathbb R$ with $\int f_k(x_k) \mathrm d x_k =0$ with  norm $\|f\|^2 = \sum _{k=1} ^{d} f_k(x_k)^2  p_k(x_k) \mathrm d x_k $ and $$\mathcal S_{*,k}(g)(x_k) = \sum_{j\not = k} \int_{\mathcal X_j} g_j(u_j) \frac {p_{j,k} (u_j,x_k)}{p_{k} (x_k)} \mathrm d u_j$$
		for $1 \leq k \leq d$. We will apply the bounded inverse theorem. For an application of this theorem we have to show 
		that $I+ \mathcal S_{*}$ is bounded and bijective. It can easily be seen that the operator is bounded. It remains to show that it is 
		surjective. We will show that 
		
		(i) $(I+ \mathcal S_{*}) g^n \to 0$ for a sequence $g^n \in \mathcal G^*$ implies that $g^n \to 0$.
		
		(ii) $ \int g_k (I+ \mathcal S_{*})_k r (x_k) p_k(x_k) \mathrm d x_k =0$ for all $ g \in  \mathcal G^*$ implies that $r=0$.
		
		\noindent Note that (i) implies that $ \mathcal G^{**} = \{ (I+ \mathcal S_{*}) g: g \in  \mathcal G^{*}\}$ is a closed subset of $ \mathcal G^{*} $. To see this suppose that $(I+ \mathcal S_{*}) g^n \to g$ for $g,g^n \in \mathcal G^{*} $. Then (i) implies that $ g^n$ is a Cauchy sequence and thus $ g^n$ has a limit in $\mathcal G^{*} $ which implies that $(I+ \mathcal S_{*}) g^n$ has a limit in $ \mathcal G^{**}$. Thus $ \mathcal G^{**} $ is closed.\newline
		From (ii) we conclude that the orthogonal complement of $ \mathcal G^{**}$ is equal to $\{0\}$. Thus the closure of $ \mathcal G^{**}$ is equal to $ \mathcal G^{*}$. This shows that $ \mathcal G^{*}= \mathcal G^{**}$ because $\mathcal G^{**}$ is closed. We conclude that $(I+ \mathcal S_{*})$ is surjective.\newline
		It remains to show (i) and (ii). Fo a proof of (i) note that $(I+ \mathcal S_{*}) g^n \to 0$ implies that 
		$$\int g_k^n(x_k) (I+ \mathcal S_{*})_k g^n(x_k) p_k(x_k) \mathrm d x_k \to 0$$
		which shows $$\sum_{k=1}^d \int g_k^n(x_k) ^2 p_k(x_k) \mathrm d x_k + \sum_{k\not = j} g_k^n(x_k) p_{kj}(x_k,x_j) g_j(x_j) \mathrm d x_k \mathrm d x_j \to 0.$$
		Thus we have $$\mathbb E[ (\sum _{k=1} ^d g_k(X_{ik}))^2] \to 0.$$
		By application of Proposition \ref{lprop:Hadd} (ii) we get that $\max_{ 1 \leq k \leq d} \mathbb E[  g_k(X_{ik})^2] \to 0$, which shows (i).\newline
		Claim (ii) can be seen by a similar argument. Note that $ \int g_k (I+ \mathcal S_{*})_k r p_k(x_k) \mathrm d x_k =0$ for all $ g \in  \mathcal G^*$ implies that $ \int r_k (I+ \mathcal S_{*})_k r (x_k)p_k(x_k) \mathrm d x_k =0$.
	\end{proof}
	We now  apply the results stated in the lemma for the final proof of Theorem \ref{theo: asymp}.
	
	\begin{proof}[of Theorem \ref{theo: asymp}]
		From \eqref{eq:back3} and Lemma \ref{lemma:rate of norm} we know that the L$_2$ norm of $\hat m^n - m -\bar m^n(\varepsilon)- \beta_n + \Delta_n m  + \Delta_n \beta_n=H_n( I+ \mathcal S_{n})^{-1} (D_n^\varepsilon + D_n^\beta) = H_n( I+\tilde{ \mathcal S}_{n})^{-1} A_n(D_n^\varepsilon + D_n^\beta)$ is of order $o_P(h^2)$. Note that $H_n( I+\tilde{ \mathcal S}_{n})^{-1} = H_n - H_n \tilde{ \mathcal S}_{n} ( I+\tilde{ \mathcal S}_{n})^{-1}. $ We already know that the sup norm of all components in $H_nA_n(D_n^\varepsilon + D_n^\beta)$ are of order $o_P(h^2)$. Thus, it remains
		to check that the sup norm of the components of $H_n \tilde{ \mathcal S}_{n} ( I+\tilde{ \mathcal S}_{n})^{-1}A_n(D_n^\varepsilon + D_n^\beta)$ is of order $o_P(h^2)$. But this follows by application of the just mentioned bound on the L$_2$ norm of $H_n( I+ \mathcal S_{n})^{-1} (D_n^\varepsilon + D_n^\beta)$, by equations \eqref{eq:AQ1} --
		\eqref{eq:AQ2}, and the bounds for the random functions $q_{k,l}$ and $q_{l}$ mentioned after the statement of the equations. One gets a bound for the sup norms by application of the Cauchy Schwarz inequality. 
	\end{proof}
	
	\appendix
	\section{Projection operators} \label{sec: projection operators}
	In this section we will state expressions for the  projection operators
	$\mathcal  P_0$, $ \mathcal  P_k$, $P_k$ and $\mathcal  P_{k'}$ ($1 \leq k \leq d$) mapping elements of $ \mathcal H$ to $\mathcal H_0$, $\mathcal H_k$, $\mathcal H_k +  \mathcal H_0 $ and $\mathcal H_{k'}  $, respectively, see Section \ref{sec:local linear}. For an element  $f = (f^{i,j})_ {i=1,\dots,n;\ j=0,\dots,d}$ the operators $ \mathcal  P_0$, $ \mathcal  P_k$, and $ P_k$ ($1 \leq k \leq d$) set all components to zero but  the components with indices $(i,0), i=1,\dots,n$. Furthermore,  in the case $d < k \leq 2d$ only  the components with index $(i,k-d), i=1,\dots,n$ are non-zero. Thus, for the definition of the operators it remains to set
	\begin{eqnarray*} 
		(\mathcal P_0(f) ) ^{i,0}(x)&=& \frac 1 n {\sum_{i=1}^n\int_{\mathcal X}\{f^{i,0}(u)+\sum_{j=1}^d f^{i,j}(x)(X_{ij}-u_j)\}
			K^{X_i}_h(X_i-u)\mathrm du}.
	\end{eqnarray*} 
	For $1 \leq k \leq d$ it suffices to define $(\mathcal P_k(f) ) ^{i,0}(x) = (P_k(f) ) ^{i,0}(x)- (\mathcal P_0(f) ) ^{i,0}$ and
	\begin{eqnarray*} 
		&& (P_k(f) ) ^{i,0}(x)
		=\frac{1}
		{\hat p_k(x_k)}  \bigg [\frac 1 n \sum_{i=1}^n\int_{u \in \mathcal X_{-k}(x_k) } \bigg \{f^{i,0}(u)+\sum_{j=1}^d f^{i,j}(u)(X_{ij}-u_j)\bigg\}  		\\ 
		&&\qquad \times K^{X_i}_h(X_i-u)\mathrm du_{-k} \bigg],\end{eqnarray*} 
	\begin{eqnarray*} && (P_{k'}(f) ) ^{i,0}(x)
		=\frac{1}
		{\hat p^{**}_k(x_k)}  \bigg [\frac 1 n \sum_{i=1}^n\int_{u \in \mathcal X_{-k}(x_k) } \bigg \{f^{i,0}(u)+\sum_{j=1}^d f^{i,j}(u)(X_{ij}-u_j) 			\bigg\}  \\ 
		&&\qquad \times (X_{ik}-x_k) K^{X_i}_h(X_i-u)\mathrm du_{-k} \bigg].
	\end{eqnarray*} 
	For the orthogonal projections of functions $m \in \mathcal H_{add} $ one can use simplified formulas. In particular,  these formulas can be used in our algorithm for updating functions $m \in \mathcal H_{add} $. If  $m \in \mathcal H_{add} $ has components $m_0,\dots,m_d, m^{(1)}_1,\dots,m^{(1)}_d$ the operators $P_k $ and $P_{k'}$ are defined as follows
	\begin{eqnarray*} 
		&&(\mathcal P_0(m)(x))) ^{i,0} = m_0 + \sum_{j=1}^d \int _{\mathcal X_j} m_j^{(1)}(u_j) \widehat p_j^*(u_j) \mathrm d u_j, \\ 
		&&(P_k(m)(x)) ^{i,0}
		=m_0 +
		m_k(x_k) + 
		m_k^{(1)}(x_k)\frac{ \hat p^{*}_k(x_k) }{\hat p_k(x_k) }
		\\ && \qquad  
		+\sum_{1 \leq j \leq d, j \not = k} \int_{\mathcal X_{-k,j}(x_k) } 
		\bigg [m_j(u_j)   \frac{ \hat p_{jk}(u_j,x_k)}
		{\hat p_k(x_k) } 
		+ m_j^{(1)}(u_j) \frac{ \hat p^{*}_{jk}(u_j,x_k)}
		{\hat p_k(x_k) } \bigg]\mathrm du_{j},\\
		&&(\mathcal P_k(m)(x)) ^{i,0}
		=
		m_k(x_k) + 
		m_k^{(1)}(x_k)\frac{ \hat p^{*}_k(x_k) }{\hat p_k(x_k) } - \sum_{1 \leq j \leq d} \int_{\mathcal X_{j} } 
		m_j^{(1)}(u_j)  \hat p^{*}_{j}(u_j)
		\mathrm du_{j} \\ 
		&& \qquad
		+\sum_{1 \leq j \leq d, j \not = k} \int_{\mathcal X_{-k,j}(x_k) } 
		\bigg [m_j(u_j)   \frac{ \hat p_{jk}(u_j,x_k)}
		{\hat p_k(x_k) } +
		m_j^{(1)}(u_j) \frac{ \hat p^{*}_{jk}(u_j,x_k)}
		{\hat p_k(x_k) }\bigg ] \mathrm du_{j},\\
		&&(P_{k'}(m)(x)) ^{i,0}
		=
		m^{(1)}_k(x_k) + 
		(m_0+ m_k(x_k))\frac{ \hat p^{*}_k(x_k) }{\hat p^{**}_k(x_k) } \\ 
		&& \qquad
		+\sum_{1 \leq j \leq d, j \not = k} \int_{\mathcal X_{-k,j}(x_k) } 
		\bigg[m_j(u_j)   \frac{ \hat p^{*}_{kj}(x_k, u_j)}
		{\hat p^{**}_k(x_k) } +
		m_j^{(1)}(u_j) \frac{ \hat p^{**}_{jk}(u_j,x_k)}
		{\hat p^{**}_k(x_k) }\bigg] \mathrm du_{j} ,
	\end{eqnarray*} 
	where for $1\leq j,k \leq d$ with $k \not =j$
	\begin{eqnarray*} 
		\hat p_{jk}(x_j,x_k) &=& \frac  1 n \sum_{i=1} ^n \int _{\mathcal X_{-(jk)}(x_j,x_k) }  K^{X_i}_h(X_i-x)\mathrm dx_{-(jk)},\\
		\hat p^{*}_{jk}(x_j,x_k) &=& \frac  1 n \sum_{i=1} ^n \int _{\mathcal X_{-(jk)}(x_j,x_k) } (X_{ij}-u_j) K^{X_i}_h(X_i-x)\mathrm dx_{-(jk)},\\
		\hat p^{**}_{jk}(x_j,x_k) &=& \frac  1 n \sum_{i=1} ^n \int _{\mathcal X_{-(jk)}(x_j,x_k) }(X_{ij}-u_j)(X_{ik}-x_k)  K^{X_i}_h(X_i-x)\mathrm dx_{-(jk)}
	\end{eqnarray*} 
	with $\mathcal X_{-(jk)}(x_j,x_k) =\{ u \in \mathcal X : u_k=x_k, u_j=x_j\}$ and $\mathcal X_{-k,j}(x_k)=\{ u\in \mathcal X _j:$ there exists $v \in \mathcal X$ with $v_k=x_k$ and $v_j = u\}$ and  $u_{-(jk)} $ denoting the vector $(u_l: l \in \{1,\dots,d\}\backslash \{j,k\} )$.\newline

	\section{Proofs of Propositions \ref{lprop:Hadd} and \ref{lprop:Hadd closed help1}} \label{app:B}
	
	In this section we will give proofs for Propositions \ref{lprop:Hadd} and \ref{lprop:Hadd closed help1}. They were used in Section \ref{sec:exist} for the discussion of the existence of the smooth backfitting estimator as well as the convergence of an algorithm for its calculation. 
	
	\begin{proof}[of Proposition \ref{lprop:Hadd}]

		$\mathbf{{(ii) \Rightarrow (i)}}.$
		Let $g^{(n)}\in L$  be a Cauchy sequence. We must show $\lim_{n \rightarrow \infty} g^{(n)} \in L$.
		By definition of $L$ there exist sequences $g_1^{(n)}\in L_1$ and $g_2^{(n)}\in L_2$ such that $g^{(n)}=g_1^{(n)}+g_2^{(n)}$. With \eqref{cond2:closed}, for $i=1,2$ we obtain
		\[
		\norm{g_i^{(n)}-g_i^{(m)}}\leq \frac{1}{c}\norm{g^{(n)}-g^{(m)}} \rightarrow  0.
		\]
		Hence, $g_1^{(n)}$ and $g_2^{(n)}$ are Cauchy sequences. Since $L_1$ and $L_2$ are closed their limits are elements of $L_1\subseteq L$ and $L_2\subseteq L$, respectively. Thus,
		\[
		\lim_{n \rightarrow \infty} g^{(n)}= \lim_{n \rightarrow \infty} g_1^{(n)}+g_2^{(n)} \in L.
		\]
		$\mathbf{{(i) \Rightarrow (iii)}}.$
		We write  $\Pi_1(L_2)=\Pi_1$. Since $L$ is closed, it is a Banach space. Using the closed graph theorem, it suffices to show the following: If $g^{(n)}\in L$ and $\Pi_1 g^{(n)}\in L_1$ are converging sequences with limits $g, g_1$, then $\Pi_1g=g_1$.\newline
		
		\noindent Let $g^{(n)}\in L$ and $\Pi_1 g^{(n)}\in L_1$ be sequences with limits $g$ and $g_1$, respectively.
		Write $g^{(n)}=g_1^{(n)}+g_2^{(n)}$.
		Since
		\[
		\norm{g_2^{(n)}-g_2^{(m)}} \leq \norm{g_1^{(n)}-g_1^{(m)}}+\norm{g^{(n)}-g^{(m)}}
		\]
		$g_2^{(n)}$ is a Cauchy sequence converging to a limit $g_2\in L_2$.
		We conclude $g=g_1+g_2$, meaning  $\Pi_1g=g_1$.\newline
		$\mathbf{{(iii) \Rightarrow (ii)}}.$
		If $\Pi_1$ is a bounded operator, then so is  $\Pi_2$,  since $\norm{g_2} \leq \norm{g}+\norm{g_1}$.
		Denote the corresponding operator norms by $C_1$ and $C_2$, respectively. Then
		$$\max\{\norm{g_1},\norm{g_2}\}\leq \max\{C_1,C_2\}\norm{g}$$
		which concludes the proof by choosing $c=\frac{1}{\max\{C_1,C_2\}}$. \newline
		$\mathbf{{(iii) \Leftrightarrow (iv)}}.$
		This follows from
		\[
		\norm{\Pi_1}= \sup_{g\in L} \frac{\norm{g_1}}{\norm{g}}=\sup_{g_1\in L_1, g_2 \in L_2} \frac{\norm{g_1}}{\norm{g_1+g_2}}= \sup_{g_1\in L_1} \frac{\norm{g_1}}{\mathrm{dist}(g_1,L_2)}=\frac{1}{\gamma(L_1,L_2)}.
		\]
	\end{proof}
	
	\begin{lemma}\label{lemma gamma}
		Let $L_1,L_2$ be closed subspaces of a Hilbert space. For $\gamma$ defined as in Proposition \ref{lprop:Hadd} we have
		\begin{align*}
		\gamma(L_1,L_2)^2 = 1-\norm{\mathcal{P}_2\mathcal{P}_1}^2.
		\end{align*}
	\end{lemma}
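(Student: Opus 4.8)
The plan is to reduce the whole statement to one orthogonal decomposition and then to identify a restricted supremum with an operator norm. Throughout let $\mathcal{P}_1,\mathcal{P}_2$ denote the orthogonal projections onto the closed subspaces $L_1,L_2$. First I would fix a nonzero $g_1\in L_1$ and use that $\mathcal{P}_2 g_1$ is the best approximation of $g_1$ in $L_2$, so that $\mathrm{dist}(g_1,L_2)=\norm{g_1-\mathcal{P}_2 g_1}$. Since $g_1-\mathcal{P}_2 g_1\in L_2^\perp$ is orthogonal to $\mathcal{P}_2 g_1\in L_2$, the Pythagorean identity gives $\mathrm{dist}(g_1,L_2)^2=\norm{g_1}^2-\norm{\mathcal{P}_2 g_1}^2$. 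Dividing by $\norm{g_1}^2$ and taking the infimum over $g_1\in L_1\setminus\{0\}$, and using that $\inf(1-t)=1-\sup t$, I obtain
\[
\gamma(L_1,L_2)^2 = 1 - \sup_{g_1\in L_1\setminus\{0\}}\frac{\norm{\mathcal{P}_2 g_1}^2}{\norm{g_1}^2}.
\]

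The remaining work is to show that this restricted supremum equals $\norm{\mathcal{P}_2\mathcal{P}_1}^2$, equivalently that $\sup_{g_1\in L_1\setminus\{0\}}\norm{\mathcal{P}_2 g_1}/\norm{g_1}=\norm{\mathcal{P}_2\mathcal{P}_1}$. For the inequality ``$\leq$'' I would observe that every $g_1\in L_1$ satisfies $\mathcal{P}_1 g_1=g_1$, hence $\norm{\mathcal{P}_2 g_1}=\norm{\mathcal{P}_2\mathcal{P}_1 g_1}$; since $L_1$ is a subset of the full Hilbert space, the supremum over $L_1\setminus\{0\}$ is bounded by $\norm{\mathcal{P}_2\mathcal{P}_1}$. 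For the reverse inequality I would take an arbitrary $g$ in the Hilbert space. If $\mathcal{P}_1 g=0$ then $\mathcal{P}_2\mathcal{P}_1 g=0$ and the ratio vanishes; otherwise $\mathcal{P}_1 g\in L_1\setminus\{0\}$ and, using $\norm{\mathcal{P}_1 g}\leq\norm{g}$,
\[
\frac{\norm{\mathcal{P}_2\mathcal{P}_1 g}}{\norm{g}} \leq \frac{\norm{\mathcal{P}_2(\mathcal{P}_1 g)}}{\norm{\mathcal{P}_1 g}} \leq \sup_{g_1\in L_1\setminus\{0\}}\frac{\norm{\mathcal{P}_2 g_1}}{\norm{g_1}}.
\]
Taking the supremum over $g$ yields $\norm{\mathcal{P}_2\mathcal{P}_1}$ on the left, so the two quantities coincide and the claimed identity follows by squaring and substituting into the displayed formula for $\gamma^2$.

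Finally I would dispose of the degenerate case $L_1=\{0\}$ separately, since there the only candidate is $g_1=0$ and the defining ratio is $0/0=1$ by convention, giving $\gamma(L_1,L_2)=1$; at the same time $\mathcal{P}_1=0$ forces $\norm{\mathcal{P}_2\mathcal{P}_1}=0$, so both sides equal $1$. In every other case $L_1$ contains a nonzero vector, the infimum is already at most $1$, and the $0/0$ convention is irrelevant. The hard part will be the passage from the supremum restricted to $L_1$ to the full operator norm of $\mathcal{P}_2\mathcal{P}_1$; the rest is the Pythagorean bookkeeping. Once the elementary observations $\mathcal{P}_1 g\in L_1$ and $\norm{\mathcal{P}_1 g}\leq\norm{g}$ are in place, however, the two inequalities close immediately, so I expect this obstacle to be routine rather than genuinely delicate.
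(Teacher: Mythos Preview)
Your argument is correct and follows essentially the same route as the paper's proof: compute $\mathrm{dist}(g_1,L_2)^2=\norm{g_1}^2-\norm{\mathcal{P}_2 g_1}^2$ via the Pythagorean identity, then identify the restricted supremum $\sup_{g_1\in L_1}\norm{\mathcal{P}_2 g_1}/\norm{g_1}$ with the operator norm $\norm{\mathcal{P}_2\mathcal{P}_1}$. Your version is in fact slightly more careful, since you justify both inequalities in the operator-norm identification and treat the degenerate case $L_1=\{0\}$ explicitly.
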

	
	\begin{proof}
		\begin{align*}
		\gamma(L_1,L_2)^2 & =\inf_{g_1\in L_1,\norm{g_1}=1}\norm{g_1-\mathcal{P}_2g_1}\\
		& =\inf_{g_1\in L_1,\norm{g_1}=1}\langle g_1-\mathcal{P}_2g_1,g_1-\mathcal{P}_2g_1\rangle\\
		& =\inf_{g_1\in L_1,\norm{g_1}=1}\langle g_1,g_1\rangle-\langle\mathcal{P}_2g_1,\mathcal{P}_2g_1\rangle\\
		& =1-\sup_{g_1\in L_1,\norm{g_1}=1}\langle\mathcal{P}_2g_1,\mathcal{P}_2g_1\rangle\\
		& =1-\sup_{g\in L,\norm{g}=1}\langle\mathcal{P}_2\mathcal{P}_1g,\mathcal{P}_2\mathcal{P}_1g\rangle\\
		& =1-\norm{\mathcal{P}_2\mathcal{P}_1}^2.
		\end{align*}
	\end{proof}
	
	\begin{proof} [of Proposition \ref{lprop:Hadd closed help1}] Let $\mathcal P_j$ be the orthogonal projection onto $L_j$. Following Lemma \ref{lemma gamma} we have
		\begin{align*}
		1-\norm{{\mathcal P_2 \mathcal P_1}}^2=\gamma(L_1,L_2)^2.
		\end{align*}
		Using Proposition \ref{lprop:Hadd}, proving $\norm{\mathcal P_2 \mathcal P_1}<1$ implies that $L$ is closed. Observe that   $\norm{\mathcal P_2 \mathcal P_1}\leq1$ because
		for $g \in L$
		\[
		\norm{\mathcal P_j g}^2= \langle\mathcal P_j g,\mathcal P_j g\rangle=\langle g,\mathcal P_j g\rangle \leq \norm{g}\norm{\mathcal P_j g},
		\]
		which yields $\norm{\mathcal P_i}\leq 1$ for $i=1,2$. To show the strict inequality, note that if $\mathcal P_2{_{\big | L_1}}$ is compact, so is $\norm{{\mathcal P_2 \mathcal P_1}}$ since the composition of two operators is compact if at least one is compact.\newline
		Thus, for every $\varepsilon>0$,  $\mathcal P_2 \mathcal P_1$ has at most a finite number of eigenvalues greater than $\varepsilon$. Since $1$ is clearly not an eigenvalue, we conclude  $\norm{{\mathcal P_1 \mathcal P_2}}<1$.
	\end{proof}

%
%
%
	\bibliographystyle{apalike}

	\bibliography{mybib.bib}

\end{document}